%
%
%

\documentclass[reqno]{amsart}

\input xy
\xyoption{all} 
\usepackage{epsfig}
\usepackage{color}

\usepackage{amsthm}
\usepackage{amssymb}
\usepackage{amsmath}
\usepackage{amscd}

\usepackage{amsopn}
\usepackage{url}
\usepackage{hyperref}\hypersetup{colorlinks}


\usepackage{color} 

\definecolor{darkred}{rgb}{1,0,0} 
\definecolor{darkgreen}{rgb}{0,0.8,0}
\definecolor{darkblue}{rgb}{0,0,1}

\hypersetup{colorlinks,
linkcolor=darkblue,
filecolor=darkgreen,
urlcolor=darkred,
citecolor=darkgreen}

%
%
%
%



\numberwithin{equation}{section}
\newtheorem {Theorem}{Theorem}
\numberwithin{Theorem}{section}

\newtheorem {Lemma}[Theorem]{Lemma}

\newtheorem {Proposition}[Theorem]{Proposition}

\theoremstyle{definition}

\theoremstyle{remark}
\newtheorem{Remark}[Theorem]{Remark}

%

\expandafter\chardef\csname pre amssym.def at\endcsname=\the\catcode`\@
\catcode`\@=11
\def\undefine#1{\let#1\undefined}
\def\newsymbol#1#2#3#4#5{\let\next@\relax
 \ifnum#2=\@ne\let\next@\msafam@\else
 \ifnum#2=\tw@\let\next@\msbfam@\fi\fi
 \mathchardef#1="#3\next@#4#5}
\def\mathhexbox@#1#2#3{\relax
 \ifmmode\mathpalette{}{\m@th\mathchar"#1#2#3}%
 \else\leavevmode\hbox{$\m@th\mathchar"#1#2#3$}\fi}
\def\hexnumber@#1{\ifcase#1 0\or 1\or 2\or 3\or 4\or 5\or 6\or 7\or 8\or
 9\or A\or B\or C\or D\or E\or F\fi}

\font\teneufm=eufm10
\font\seveneufm=eufm7
\font\fiveeufm=eufm5
\newfam\eufmfam
\textfont\eufmfam=\teneufm
\scriptfont\eufmfam=\seveneufm
\scriptscriptfont\eufmfam=\fiveeufm

\catcode`\@=\csname pre amssym.def at\endcsname


\def    \eps    {\epsilon}

\newcommand{\CH}{{\mathcal H}}
\newcommand{\CA}{{\mathcal A}}

\newcommand{\CS}{{\mathcal S}}

\newcommand{\supp}{\operatorname{supp}}

\newcommand{\const}{{\mathit const}}

\newcommand{\tQ}{\tilde{Q}}
\newcommand{\hQ}{\hat{Q}}

\newcommand{\tH}{\tilde{H}}

\def    \nat    {{\natural}}

\def    \C      {{\mathbb C}}
\def    \R      {{\mathbb R}}

\def    \Z      {{\mathbb Z}}
\def    \N      {{\mathbb N}}

\def    \CP     {{\mathbb C}{\mathbb P}}

\def    \12    {{\frac{1}{2}}}

\def    \p      {\partial}

\def    \HF     {\operatorname{HF}}

\def    \Fix     {\operatorname{Fix}}

\newcommand{\hq}{\CH_{\scriptscriptstyle{Q}}}

\newcommand{\hhq}{\hat{\CH}_{\scriptscriptstyle{Q}}}

\newcommand{\jq}{J_{\scriptscriptstyle{Q}}}

\newcommand{\jkq}{J_{\scriptscriptstyle{kQ}}}

\def    \MUCZ  {\operatorname{\mu_{\scriptscriptstyle{CZ}}}}





\begin{document}


\setlength{\smallskipamount}{6pt}
\setlength{\medskipamount}{10pt}
\setlength{\bigskipamount}{16pt}





\title[Hyperbolic Quadratic at Infinity]{Periodic Orbits of
  Hamiltonian Systems Linear and Hyperbolic at infinity }

\author[Ba\c sak G\"urel]{Ba\c sak Z. G\"urel}

\address{Department of Mathematics, University of Central Florida,
  Orlando, FL 32816, USA} \email{basak.gurel@ucf.edu}

\subjclass[2000]{53D40, 37J10} \keywords{Periodic orbits, Hamiltonian
  flows, Floer homology, Conley conjecture}

\date{\today} 

\thanks{The work is partially supported by the NSF grants DMS-0906204
  and DMS-1207680.}


\begin{abstract} 
  We consider Hamiltonian diffeomorphisms of symplectic Euclidean
  spaces, generated by compactly supported time-dependent
  perturbations of hyperbolic quadratic forms. We prove that, under
  some natural assumptions, such a diffeomorphism must have simple
  periodic orbits of arbitrarily large period when it has fixed points
  which are not necessary from a homological perspective.
\end{abstract}

\maketitle

\tableofcontents
\section{Introduction and main results}
\label{sec:main-results}
\subsection{Introduction}
\label{sec:intro}
In this paper we consider time-dependent Hamiltonians $H$ on $\R^{2n}$
which, outside a compact set, are autonomous and coincide with a
hyperbolic quadratic form (i.e., a non-degenerate quadratic form whose
Hamiltonian vector field has no purely imaginary eigenvalues). We
prove that, under some additional conditions, the Hamiltonian
diffeomorphism $\varphi_H$ must have simple (i.e., uniterated)
periodic orbits of arbitrarily large (prime) period when it has
certain ``homologically unnecessary'' fixed points. In particular,
$\varphi_H$ then has infinitely many periodic orbits. To be more
precise, this result holds provided that $\varphi_H$ has at least one
non-degenerate (or even homologically non-trivial) fixed point with
non-zero mean index, and the quadratic form (i.e., the corresponding
linear Hamiltonian vector field) has only real eigenvalues. (See
Remark \ref{rk:complex} for the case of complex eigenvalues.)

Our main motivation for studying this question comes from a variant of
the Conley conjecture, applicable to manifolds for which the standard
Conley conjecture fails. Recall in this connection that the latter
asserts the existence of infinitely many periodic orbits for every
Hamiltonian diffeomorphism of a closed symplectic manifold.  This is
the case for manifolds with spherically-vanishing first Chern class
(of the tangent bundle) and also for negative monotone manifolds; see
\cite{CGG,GG:gaps,He:irr} and also \cite{FrHa,Gi:conley,GG:neg-mon,
  Hi,LeC,SZ}. However, the Conley conjecture, as stated, fails for
some simple manifolds such as $S^2$: an irrational rotation of $S^2$
about the $z$-axis has only two periodic orbits, which are also the
fixed points; these are the poles.  In fact, any manifold that admits
a Hamiltonian torus action with isolated fixed points also admits a
Hamiltonian diffeomorphism with finitely many periodic orbits. In
particular, $\CP^n$, the Grassmannians, and, more generally, most of
the coadjoint orbits of compact Lie groups as well as symplectic toric
manifolds all admit Hamiltonian diffeomorphisms with finitely many
periodic orbits.

A viable alternative to the Conley conjecture for such manifolds is
the conjecture that a Hamiltonian diffeomorphism with more fixed
points than necessarily required by the (weak) Arnold conjecture has
infinitely many periodic orbits. (It is possible that in this
conjecture one might need to impose some kind of non-degeneracy
condition (e.g., homological non-triviality) on the fixed points, as
is the case for the version considered in this paper.) For $\CP^n$,
the expected threshold is $n+1$.  This conjecture is inspired by a
celebrated theorem of Franks stating that a Hamiltonian diffeomorphism
(or, even, an area preserving homeomorphism) of $S^2$ with at least
three fixed points must have infinitely many periodic orbits,
\cite{Fr1,Fr2}; see also \cite{FrHa,LeC} for further refinements and
\cite{BH,CKRTZ,Ke:JMD} for symplectic topological proofs.  We will
refer to this analogue of the Conley conjecture as the
\emph{HZ-conjecture}, for, to the best of our knowledge, the first
written account of the assertion is in \cite[p.\ 263]{HZ}.

We find it useful to view the HZ-conjecture in a broader
context. Namely, it appears that the presence of a fixed point that is
unnecessary from a homological or geometrical perspective is already
sufficient to force the existence of infinitely many periodic orbits.
For instance, a theorem from \cite{GG:hyperbolic} asserts that, for a
certain class of closed monotone symplectic manifolds including
$\CP^n$, any Hamiltonian diffeomorphism with a hyperbolic fixed point
must necessarily have infinitely many periodic orbits. (Note that the
original HZ-conjecture, at least for non-degenerate Hamiltonian
diffeomorphisms of $\CP^n$, would follow if one could replace a
hyperbolic fixed point with a non-elliptic one in this theorem.)
Furthermore, there are obvious analogues of the HZ-conjecture for
symplectomorphisms or non-contractible periodic orbits of Hamiltonian
diffeomorphisms. These analogues are also of interest and in some
instances more accessible than the original HZ-conjecture; see, e.g.,
\cite {Ba,GG09generic,GG:hyperbolic,Gu:nc}.

The generalized HZ-conjecture is also the central theme of this paper,
although here we focus on a different aspect of the problem. Our main
result, Theorem \ref{thm:main2}, can be viewed as a ``local version''
of this conjecture, and it holds in all dimensions.  Namely, we prove
a variant of the HZ-conjecture for Hamiltonians on $\R^{2n}$ which are
compactly supported perturbations of certain quadratic forms.  Working
with $\R^{2n}$ allows us to circumvent a number of symplectic
topological obstacles to proving the HZ-conjecture and concentrate on
what we interpret as the dynamical part of the problem, which is still
quite non-trivial.  This is a key difference, technical and
conceptual, between the present work and the approach taken in
\cite{GG:hyperbolic} where the symplectic topology of the ambient
manifold plays a central role.  We use Floer theoretical techniques in
the proofs. Deferring a more detailed discussion of our method to
Section \ref{sec:results}, we merely mention at this point that, for
technical reasons, the quadratic form needs to be hyperbolic. Finally,
it should also be noted that Hamiltonian systems on $\R^{2n}$ with a
controlled (e.g., asymptotically linear) behavior at infinity have
been extensively studied in the context of Hamiltonian mechanics by
classical variational methods; see, e.g., \cite{Ab-book,AZ,An,Cor,
  MW,Ra,ZL,Zou} and references therein. However, to the best of our
knowledge, there is no overlap between that approach and the present
work, including the results.

\subsection{Main results}
\label{sec:results}
To state the main results of the paper, recall that the \emph{mean
  index} $\Delta_H(x)\in\R$ of a periodic orbit $x$ of the Hamiltonian
flow of $H$ measures, roughly speaking, the total angle swept out by
certain eigenvalues with absolute value one of the linearized flow
$d\varphi^t_H$ along $x$; see \cite{Lo,SZ} and also \cite[Section
3.3]{EP} and references therein for a more detailed discussion.  For
instance, the mean index is zero when $d\varphi^t_H$ has no
eigenvalues on the unit circle for any $t\neq 0$, and hence the orbit
is hyperbolic.  Finally, denote by $\Fix(\varphi_H)$ the collection of
fixed points of $\varphi_H$.

\begin{Theorem}
\label{thm:main2}
Let $H \colon S^1 \times \R^{2n} \to \R$ be a Hamiltonian which is
equal to a hyperbolic quadratic form $Q$ at infinity (i.e., outside a
compact set) such that $Q$ has only real eigenvalues. Assume that
$\varphi_H$ has a non-degenerate fixed point with non-zero mean index
and $\Fix(\varphi_H)$ is finite.  Then $\varphi_H$ has simple, i.e.,
uniterated, periodic orbits of arbitrarily large period.
\end{Theorem}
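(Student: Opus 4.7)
The plan is to argue by contradiction, in the spirit of the Ginzburg--Gurel approach to Conley-conjecture-type theorems, adapted to the $\R^{2n}$ setting. Suppose $\varphi_H$ has simple periodic orbits only of periods bounded by some $N$. Then for every prime $p>N$, any $p$-periodic orbit must be an iterate of a fixed point, since its period divides $p$ and hence is either $1$ or $p$, and the second possibility is excluded.

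First I would set up filtered Floer homology on $\R^{2n}$ for $H$ and for the Hamiltonians $H^{(p)}$ generating $\varphi_H^p$. The hyperbolicity of $Q$ together with the real-spectrum assumption is essential here: it ensures that the linear flow of $Q$ has no nontrivial periodic orbits at any period and yields the $C^0$-estimates on Floer cylinders needed to keep them away from infinity. In a sufficiently large action window, $\HF_*(H^{(p)})$ is well-defined and, by a continuation argument, isomorphic to $\HF_*(pQ)$. Because a hyperbolic linear Hamiltonian flow with real spectrum has vanishing mean index, a path-homotopy through hyperbolic matrices shows that the Conley--Zehnder index of the origin for $pQ$ equals that for $Q$; hence $\HF_*(H^{(p)})$ is concentrated in a single fixed degree $d_Q$ \emph{independent of $p$}, with uniformly bounded total rank. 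Next, I would apply local Floer theory at the distinguished fixed point $x_0$: non-degeneracy together with $\Delta_H(x_0)\ne 0$ makes $x_0$ admissible for iterations, so that, after excluding a finite set of exceptional periods, $\HFL(x_0^{(p)})\cong\Z$ is concentrated in a single degree $\mu_p = p\,\Delta_H(x_0)+O(1)$ with $|\mu_p|\to\infty$ as $p\to\infty$.

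To close the argument, compare local and global Floer data. The generator of $\HFL(x_0^{(p)})$ is a generator of the Floer complex for $H^{(p)}$, whose homology is concentrated in the fixed degree $d_Q$; for $p$ large the degree $\mu_p$ lies far from $d_Q$, so this generator must be cancelled in the Floer differential by a generator in degree $\mu_p\pm 1$, which under our standing assumption could only come from an iterated fixed point $y^{(p)}$. Matching degrees along a sequence of primes forces $\Delta_H(y)=\Delta_H(x_0)$, and a pigeonhole/resonance argument, using the finiteness of $\Fix(\varphi_H)$ and hence of the set of mean indices, rules out systematic cancellation along an infinite admissible set of primes, producing an uncancelled class in $\HF_*(H^{(p)})$ in the wrong degree, a contradiction. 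I expect the hardest step to be precisely this combinatorial control: persistence of $\HFL(x_0^{(p)})$ under iteration, combined with the systematic exclusion of cancellations against iterates of the other fixed points, is the mechanism that converts ``non-zero mean index'' into ``infinitely many simple periodic orbits.'' A secondary difficulty is the clean setup of Floer theory and the continuation isomorphism on $\R^{2n}$ in the hyperbolic-quadratic-at-infinity regime, in particular arranging that no Floer trajectory can escape to infinity where the dynamics is that of $Q$.
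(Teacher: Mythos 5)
Your framework is in the right spirit and matches the paper's at the level of strategy: argue by contradiction, work over large primes so that every $p$-periodic orbit is an iterated fixed point, use the maximum principle to set up Floer theory on $\R^{2n}$, note that the total Floer homology of $H^{\nat p}$ is concentrated in degree $0$ (the Conley--Zehnder index of the hyperbolic origin), and observe that the local Floer homology of $x_0^{p}$ drifts to degree $\mu_p \approx p\Delta_H(x_0)\to\pm\infty$. However, the step you flag as hardest --- the ``pigeonhole/resonance argument ruling out systematic cancellation'' --- is not just hard, it is not correct as stated. A single fixed point $y$ with the same action and mean index as $x_0$ could perfectly well cancel $x_0^{p}$ in the Floer differential for \emph{every} prime $p$; finiteness of $\Fix(\varphi_H)$ gives a finite set of mean indices but provides no obstruction to this, since one $y$ suffices.

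The paper's contradiction mechanism is of a different nature. It passes to \emph{filtered} Floer homology with a small action window $(-\alpha,\alpha)$ around $\CA(x_0)=0$; once $0$ is the only action value in the window, the filtered group is a direct sum of local Floer homologies (no cancellation between distinct fixed points with the same action), so $\HF_{s_i}^{(-\alpha,\alpha)}(\tH^{\nat p_i})\ne 0$ automatically for a degree $s_i$ in $\supp\HF(\tH^{\nat p_i},x^{p_i})$. The contradiction then arises from a commutative diagram of continuation maps: the composition
$\HF_{s_i}^{(-\alpha,\alpha)}(\tH^{\nat p_i})\to\HF_{s_i}^{(-\alpha,\alpha)+\delta}(\tH^{\nat p_{i+m}})\to\HF_{s_i}^{(-\alpha,\alpha)+2\delta}(\tH^{\nat p_i})$
is a quotient--inclusion isomorphism, yet the middle group vanishes because $s_i$ is chosen, via the spreading of mean indices, to miss the local Floer support of every iterate $\gamma^{p_{i+m}}$. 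Thus a nonzero isomorphism factors through zero.

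Making this diagram work forces two non-routine ingredients which are absent from your sketch and account for most of the paper's technical content. First, one needs the action shift $\delta$ of the continuation map from $\tH^{\nat p_i}$ to $\tH^{\nat p_{i+m}}$ to be small compared to the action gap $p_i a$. A direct estimate gives a shift of order $(p_{i+m}-p_i)\cdot\sup_{B_{p_{i+m}}}|Q|$, and $B_{p_{i+m}}$ grows exponentially with $p_{i+m}$, so this blows up. The paper replaces $Q$ by a modified Hamiltonian $\tilde Q$ (Lemma~\ref{lemma:Qtilde}) equal to $Q$ on the region where the dynamics is nontrivial and to $\eps Q$ outside a ball of radius $O(1/\sqrt{\eps})$, yielding the uniform bound $\|\tH^{\nat(k+\ell)}-\tH^{\nat k}\|_{B_{k+\ell}}\le C_3\ell$ with $C_3$ independent of $k,\ell,\eps$ (Lemma~\ref{lemma:shift}); this also requires the notion of slow homotopies and the Floer-theoretic groundwork of Section~\ref{sec:homotopy}. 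Second, one needs an analytic number theory input: the prime-gap estimate $p_{i+1}-p_i=o(p_i)$ (Baker--Harman--Pintz) ensures $p_i a$ eventually dominates $C_3(p_{i+m}-p_i)$. Without both ingredients, the diagram argument cannot be closed, and without the filtered/continuation-map mechanism, the cancellation problem you correctly identify remains unresolved.
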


As a consequence, $\varphi_H$ has infinitely many simple periodic
orbits regardless of whether $\Fix(\varphi_H)$ is finite or not.  In
fact, the non-degeneracy condition in Theorem \ref{thm:main2} can be
relaxed and replaced by a much weaker, albeit more technical,
condition that the point is isolated and homologically non-trivial,
i.e., its local Floer homology is non-zero.  This is Theorem
\ref{thm:main}.

\begin{Remark}  
This theorem and Theorem \ref{thm:lowdim} below as well as their
generalizations discussed in Section \ref{sec:proofs}, also hold when
the quadratic form $Q$ has complex eigenvalues $\sigma$, provided that
$|\text{Re } \sigma | > | \text{Im } \sigma | $; see Remark
\ref{rk:complex}.
\end{Remark}

\begin{Remark}
\label{rk:master}
Viewing Theorem \ref{thm:main2} from the perspective of the
generalized HZ-conjecture, observe that the non-degenerate (or
homologically non-trivial) fixed point with non-zero mean index is the
``unnecessary'' point. Furthermore, the presence of one such point $x$
implies the existence of at least two other (homologically
non-trivial) orbits. Indeed, the Floer homology for all iterations of
$H$ is concentrated in degree zero (see Section \ref{sec:Floer}), and
once $k$ is so large that the index of the iterated orbit $x^k$ is
outside the range $[-n,\,n]$, another orbit must take over generating
the homology. Furthermore, there should be at least one more periodic
orbit to cancel out the contribution of $x^k$ to the homology in
higher degrees.
\end{Remark}

Hypothetically, results similar to Theorem \ref{thm:main2} and other
theorems discussed in this section hold when a hyperbolic quadratic
form is replaced by any (autonomous) quadratic form $Q$ without
non-trivial periodic orbits. For instance, in this case, one can
expect to have infinitely many periodic orbits whenever $\varphi_H$
has a non-degenerate fixed point with mean index different from
$\Delta_Q(0)$ or has at least two non-degenerate fixed points; cf.\
Remark \ref{rk:elliptic}. (The latter conjecture, which was the
starting point of this work, is due to Alberto Abbondandolo.)

As has been pointed out above, the proof of Theorem \ref{thm:main2} is
based on Floer theory.  However, for a general quadratic form $Q$,
even when the Floer homology exists, continuation maps fail to have
the desired properties and the homology is not invariant under
iterations. This is the case, for instance, for positive or negative
definite $Q$ (see Remark \ref{rk:posdef}) and the main reason why we
restrict our attention to hyperbolic quadratic forms. Even for such
forms some foundational aspects of Floer theory have to be
reexamined. We do this in Section \ref{sec:maxp}, using, as one could
expect, a version of the maximum principle.

The condition that the fixed point is non-degenerate (and that it has
non-zero mean index) is essential in Theorem \ref{thm:main2}.  For
instance, starting with the flow of $Q(x,y)=xy$ on $\R^2$, it is easy
to introduce degenerate (homologically trivial) fixed points by
slightly perturbing the flow away from the saddle. This way one can
create an arbitrarily large number of fixed points without generating
infinitely many periodic orbits. In fact, we expect some form of
non-degeneracy (e.g., homological non-triviality) to be essential in
the HZ-conjecture beyond the case of $S^2$.

In low dimensions, Theorem \ref{thm:main2} combined with simple index
analysis implies the HZ-conjecture in its original form for
Hamiltonians in question.  To state the result, recall first that
$\varphi_H$ is said to be strongly non-degenerate if all iterations of
$\varphi_H$ are non-degenerate.

\begin{Theorem}
\label{thm:lowdim}
Let $H \colon S^1 \times \R^{2n} \to \R$, with $2n=2$ or $4$, be a
Hamiltonian which is equal to a hyperbolic quadratic form $Q$ at
infinity such that $Q$ has only real eigenvalues. Assume that
$\varphi_H$ is strongly non-degenerate and has at least two fixed
points, and $\Fix(\varphi_H)$ is finite. Then $\varphi_H$ has simple
periodic orbits of arbitrarily large period.
\end{Theorem}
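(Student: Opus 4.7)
The plan is to reduce Theorem \ref{thm:lowdim} to Theorem \ref{thm:main2} by producing, under the hypotheses, at least one fixed point of $\varphi_H$ with non-zero mean index. Since $\varphi_H$ is strongly non-degenerate every fixed point is non-degenerate, so once such a point is produced, Theorem \ref{thm:main2} delivers simple periodic orbits of arbitrarily large period.

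The key input is the filtered Floer homology $\HF_*(H)$ for Hamiltonians in our class. Because $H$ agrees at infinity with the totally real hyperbolic quadratic form $Q$, whose only $1$-periodic orbit is the origin with $\MUCZ = 0$, a continuation argument identifies $\HF_*(H)$ with a one-dimensional group concentrated in degree zero (cf.\ Remark \ref{rk:master}), and the Floer-complex Euler characteristic equals
\[
\chi \;=\; \sum_{x \in \Fix(\varphi_H)} (-1)^{\MUCZ(x)} \;=\; 1.
\]
I would then argue by contradiction: assuming $\Delta_H(x) = 0$ for every $x \in \Fix(\varphi_H)$, I will show that each fixed point contributes $+1$ to $\chi$, giving $\#\Fix(\varphi_H) = \chi = 1$ and contradicting the hypothesis of at least two fixed points.

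The contribution analysis combines the non-degeneracy bound $|\MUCZ(x) - \Delta_H(x)| < n$ with the Lefschetz-parity identity $(-1)^{\MUCZ(x)} = (-1)^n \operatorname{sign}\det(I - d\varphi_H(x))$. When $\Delta_H(x) = 0$, the bound confines $\MUCZ(x)$ to $\{-(n-1), \dots, n-1\}$; in dimension $2n = 2$ this pins $\MUCZ(x) = 0$ directly, so every fixed point contributes $+1$. In dimension $2n = 4$ it leaves $\MUCZ(x) \in \{-1, 0, 1\}$, and a short case analysis over the symplectic normal forms of $d\varphi_H(x)$ finishes the job: strong non-degeneracy forces every elliptic rotation angle to be an irrational multiple of $\pi$, which together with $\Delta_H(x) = 0$ rules out the positive--negative real hyperbolic case, the hyperbolic--elliptic cases, and the off-the-unit-circle complex quadruple at non-zero angle. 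The surviving normal forms (totally positive hyperbolic, two negative hyperbolic blocks with canceling signs, or two elliptic blocks with canceling Krein-weighted angles) all have $\det(I - d\varphi_H(x)) > 0$, so $\MUCZ(x)$ is even and hence $0$, again contributing $+1$.

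The main obstacle I expect is this four-dimensional normal-form enumeration: one must verify in each $\Sp(4)$-conjugacy class of non-degenerate linearizations consistent with $\Delta_H(x) = 0$ that the Lefschetz determinant is positive, using the irrationality constraint carefully to exclude the remaining cases. Once this parity bookkeeping is done, the Euler characteristic count produces a fixed point with non-zero mean index and the reduction to Theorem \ref{thm:main2} is immediate.
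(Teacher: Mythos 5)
Your proposal follows essentially the same route as the paper: both reduce to Theorem \ref{thm:main2} by extracting, from the fact that $\HF_*(H)$ is $\Z_2$ concentrated in degree zero, a fixed point with $\MUCZ \neq 0$, and then use the estimate $|\MUCZ(x) - \Delta_H(x)| < n$ plus an $\Sp(4)$ case analysis to upgrade this to $\Delta_H(x) \neq 0$. The paper argues directly from the rank of $\HF_0$ (if all $\MUCZ$ were zero the complex would be concentrated in degree $0$ with no differential and the homology would have rank $\#\Fix \ge 2$), whereas you use the Euler characteristic together with the Lefschetz-sign identity; these are equivalent in strength here, and your version is fine.

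Two small inaccuracies in your four-dimensional case analysis are worth repairing, though they do not break the argument. First, the irrationality of elliptic rotation angles coming from strong non-degeneracy is not actually used: the hyperbolic-plus-elliptic types are excluded because their mean index is the sum of an integer and a non-integer (hence nonzero), and the positive-times-negative hyperbolic type is excluded because its mean index is odd. Second, the genuinely complex off-circle quadruple ($\lambda,\bar\lambda,1/\lambda,1/\bar\lambda$ with $|\lambda|\neq1$, $\lambda\notin\R$) is not ruled out by $\Delta_H(x)=0$; it is a surviving case, but there $\Delta_H(x)=\MUCZ(x)\in\Z$ and $\det\bigl(I-d\varphi_H(x)\bigr)=|1-\lambda|^2\,|1-1/\lambda|^2>0$, so it still contributes $+1$, and the count goes through.
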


Note that strong non-degeneracy is a $C^{\infty}$-generic condition in
the class of Hamiltonians under consideration.  Let us also point out
that, in contrast with many closed manifolds (see, e.g.,
\cite{GG09generic}), the existence of infinitely many periodic orbits
is obviously not a $C^{\infty}$- or even $C^2$-generic property of
Hamiltonians in Theorem \ref{thm:lowdim}: one has to have an extra
periodic orbit which serves as a seed eventually ``spawning an
infinitude of off-springs".

In dimension two, the strong non-degeneracy requirement can be
relaxed. It suffices to just assume that $\varphi_H$ has at least two
isolated homologically non-trivial fixed points; see Theorem
\ref{thm:R2}. (Also, note that in this case the eigenvalues of a
hyperbolic quadratic form are automatically real.)  However, in
dimension four, non-degeneracy enters the proof in a crucial way.
Finally, note that the two-dimensional case of Theorem \ref{thm:lowdim}
is intimately related to the Franks' theorem; see Remarks
\ref{rk:elliptic} and \ref{rk:Franks}.

 \begin{Remark}
   A more general version of Theorem \ref{thm:lowdim}
   for $2n=2$ was proved in \cite[Theorem 5.1.9]{Ab-book}.
 \end{Remark}

\subsection{Organization of the paper}
In Section \ref{sec:conventions}, we set conventions and notation, and
briefly recall some of the tools used in the paper and provide
relevant references. We establish a version of the maximum principle
and show that the Floer homology as well as the relevant continuation
maps are defined for the class of Hamiltonians in question in Section
\ref{sec:maxp}.  Finally, in Section \ref{sec:proofs}, we prove
Theorems \ref{thm:main2} and \ref{thm:lowdim}.

\subsection{Acknowledgements} The author is grateful to Alberto
Abbondandolo, Viktor Ginzburg, Leonid Polterovich and Cem
Yal\c{c}{\i}n Y{\i}ld{\i}r{\i}m for useful discussions, comments and
suggestions and to the referee for valuable remarks.

\section{Conventions and notation}
\label{sec:conventions}
Throughout the paper, we will be working with the symplectic manifold
$(\R^{2n},\omega)$, where $\omega$ is the standard symplectic form.
All Hamiltonians $H$ considered here are assumed to be one-periodic in
time, i.e., $H\colon S^1 \times \R^{2n} \to\R$, and we set $H_t =
H(t, \cdot)$ for $t\in S^1=\R/\Z$.  The Hamiltonian vector field $X_H$
of $H$ is defined by $i_{X_H}\omega=-dH$. The (time-dependent) flow of
$X_H$ is denoted by $\varphi_H^t$ and its time-one map by
$\varphi_H$. Such time-one maps are referred to as \emph{Hamiltonian
  diffeomorphisms}.  The action of a one-periodic Hamiltonian $H$ on a
loop $\gamma\colon S^1\to \R^{2n}$ is defined by
$$
\CA_H(\gamma)=-\int_z\omega+\int_{S^1} H_t(\gamma(t))\,dt,
$$
where $z\colon D^2\to M$ is such that $z\mid_{S^1}=\gamma$.  The least
action principle asserts that the critical points of $\CA_H$ on the
space of all smooth maps $\gamma\colon S^1\to \R^{2n}$ are exactly the
one-periodic orbits of $\varphi_H^t$.

Let $K$ and $H$ be two one-periodic Hamiltonians. The ``composition''
$K\nat H$ is defined by the formula
\begin{equation}
\label{eq:sum}
(K\nat H)_t=K_t+H_t\circ(\varphi^t_K)^{-1}
\end{equation}
and the flow of $K\nat H$ is $\varphi^t_K\circ \varphi^t_H$.  We set
$H^{\nat k}=H\nat\ldots \nat H$ ($k$ times).  Abusing terminology, we
will refer to $H^{\nat k}$ as the $k$th iteration of $H$. Clearly,
$H^{\nat k}=kH$ when $H$ is autonomous.  (Note that the flow
$\varphi^t_{H^{\nat k}}=(\varphi_H^t)^k$, $t\in [0,\,1]$, is homotopic
with fixed end-points to the flow $\varphi^t_H$, $t\in [0,\,
k]$. Also, in general, $H^{\nat k}$ is not one-periodic, even when $H$
is.)  Furthermore, setting
\begin{equation}
\label{eq:norm}
\| F\|_B=\int_{S^1}\sup_B |F|\, dt
\end{equation}
for a bounded set $B\subset\R^{2n}$, we have $\|H^{\nat k}\|_B=k\|
H\|_B$ when $H$ is autonomous. Note that $\|F\|_B$ is a variant of the
Hofer norm.  (When $F$ is compactly supported on $\R^{2n}$, we will
also use the notation $\| F\|_{\R^{2n}}$ with the obvious meaning.)

The $k$th iteration of a one-periodic orbit $\gamma$ of $H$ will be
denoted by $\gamma^k$. More specifically,
$\gamma^k(t)=\varphi_{H^{\nat k}}^t \left(\gamma(0)\right)$, where
$t\in [0,\,1]$.  We can think of $\gamma^k$ as the $k$-periodic orbit
$\gamma(t)$, $t\in [0,\,k]$, of $H$.  Hence, there is an
action-preserving one-to-one correspondence between one-periodic
orbits of $H^{\nat k}$ and $k$-periodic orbits of $H$.

The \emph{action spectrum} $\CS(H)$ of $H$ is the set of critical
values of $\CA_H$. This is a zero measure, closed (hence nowhere
dense) set; see, e.g., \cite{HZ}. Clearly, the action functional is
homogeneous with respect to iteration:
\begin{equation*}
\label{eq:action-hom}
\CA_{H^{\nat k}}(\gamma^k)=k\CA_H(\gamma).
\end{equation*}

A periodic orbit $\gamma$ of $H$ is said to be \emph{non-degenerate}
if the linearized return map $d\varphi_H \colon T_{\gamma (0)}M\to
T_{\gamma (0)}M$ has no eigenvalues equal to one.  A Hamiltonian is
called non-degenerate if all its one-periodic orbits are
non-degenerate and strongly non-degenerate if all $k$-periodic orbits
(for all $k$) are non-degenerate.

Let $\gamma$ be a non-degenerate periodic orbit. The
\emph{Conley--Zehnder index} $\MUCZ(H,\gamma)\in\Z$ is defined, up to
a sign, as in \cite{Sa,SZ}. (When $H$ is clear from the context we use
the notation $\MUCZ(\gamma)$.)  More specifically, in this paper, the
Conley--Zehnder index is the negative of that in \cite{Sa}.  In other
words, we normalize $\MUCZ$ so that $\MUCZ(\gamma)=n$ when $\gamma$ is
a non-degenerate maximum of an autonomous Hamiltonian with small
Hessian. Furthermore, recall that the mean index $\Delta_H(\gamma)$ is
defined regardless of whether $\gamma$ is degenerate or not, and
$\Delta_H(\gamma)$ depends continuously on $H$ and $\gamma$ in the
obvious sense.  When $\gamma$ is non-degenerate, we have
\begin{equation*}
\label{eq:Delta-MUCZ}
0\leq|\Delta_H(\gamma)-\MUCZ(H,\gamma)|<n.
\end{equation*}
Furthermore, the mean index is also homogeneous with respect to iteration:
\begin{equation*}
\label{eq:index-hom}
\Delta_{H^{\nat k}}(\gamma ^k)=k \Delta_H(\gamma).
\end{equation*}

\section{Maximum principle and Floer homology}
\label{sec:maxp}
Our goal in this section is to show that the Floer homology is defined
and has the standard properties for the class of Hamiltonians in
question. In our setting, essentially the only issue to deal with is
the compactness of moduli spaces of Floer trajectories, which we
establish by proving a version of the maximum principle.

\subsection{Floer homology}
\label{sec:Floer}
Let $Q$ be a hyperbolic quadratic form on $\R^{2n}$, i.e., $Q$ is
non-degenerate and has no eigenvalues on $\text{\bf i} \R$. (Recall
that throughout the paper by eigenvalues of $Q$ we mean the
eigenvalues of the linear Hamiltonian vector field $X_Q$.)  Assume
further that all eigenvalues of $Q$ are real.  (See Remark
\ref{rk:complex} for a variant of the maximum principle when $Q$ has
complex eigenvalues.) Denote by $\hq$ the set of one-periodic
Hamiltonians $H \colon S^1 \times \R^{2n} \to \R$ which are compactly
supported time-dependent perturbations of $Q$. Let $J=J_t$ be a
time-dependent almost complex structure compatible with $\omega$.  We
are interested in solutions $u \colon \R\times S^1 \to \R^{2n}$
of the Floer equation
\begin{equation}
\label{eq:Fl}
\p_s u+J(u) \p_t u =-\nabla H_t (u),
\end{equation}
where $u=u(s,\,t)$ with coordinates $(s,t)$ on $\R\times S^1$ and the
gradient is taken with respect to the one-periodic in time metric
$\left<\cdot\,,\cdot\right>=\omega(\cdot\,,J\cdot)$ on $\R^{2n}$.

In this setting we have:
\begin{Theorem}
\label{thm:maxp}
Let $Q$ be a hyperbolic quadratic form on $(\R^{2n}, \omega)$ with
only real eigenvalues. Then there exists a linear complex structure
$\jq$ compatible with $\omega$ such that whenever $J \equiv \jq$ and
$H \equiv Q$ outside an open ball $B$ with respect to the metric
$\left<\cdot\,,\cdot\right>_Q:=\omega(\cdot\,,\jq\cdot)$, any solution
of \eqref{eq:Fl} for the pair $(H,J)$ that is asymptotic to periodic
orbits of $H$ in $B$ is necessarily contained in $B$.
\end{Theorem}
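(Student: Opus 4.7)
The plan is to exhibit a non-negative function on $\R^{2n}$ which is subharmonic along any Floer trajectory of the pair $(H,J)$ throughout the region where $H\equiv Q$ and $J\equiv \jq$, and then to invoke the classical strong maximum principle. My test function will be $f(x)=\tfrac{1}{2}|x|_Q^2$, the squared norm in the metric $\langle\cdot,\cdot\rangle_Q=\omega(\cdot,\jq\cdot)$. To make this concrete, I first put $Q$ in normal form: because $X_Q$ is hyperbolic with only real eigenvalues, its spectrum consists of pairs $\pm\lambda_i$ with $\lambda_i>0$, and in the diagonalizable case one finds Darboux coordinates $(x_i,y_i)_{i=1}^n$ in which
$$
Q=\sum_{i=1}^n \lambda_i\, x_i y_i,
$$
each symplectic 2-plane $\{(x_i,y_i)\}$ being $X_Q$-invariant. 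I then set $\jq$ to be the standard complex structure $\jq\p_{x_i}=\p_{y_i}$, so that $\langle\cdot,\cdot\rangle_Q$ is the flat Euclidean metric and $f(x)=\tfrac{1}{2}\sum_i(x_i^2+y_i^2)$.

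Next I would perform the subharmonicity computation. Writing each coordinate pair as a complex variable $u_i=u_{x_i}+\mathbf{i}\,u_{y_i}$ and using the operators $\bar\p=\tfrac{1}{2}(\p_s+\mathbf{i}\p_t)$ and $\p=\tfrac{1}{2}(\p_s-\mathbf{i}\p_t)$, the Floer equation $\p_s u+\jq\p_t u=-\nabla Q(u)$ decouples into
$$
\bar\p u_i=-\tfrac{\mathbf{i}\lambda_i}{2}\,\overline{u_i}\qquad\text{for every }i.
$$
A short calculation then gives $\Delta u_i=4\p\bar\p u_i=\lambda_i^2 u_i$, whence
$$
\Delta |u_i|^2 = 2\,\mathrm{Re}\bigl(\Delta u_i\cdot\overline{u_i}\bigr)+2\bigl(|\p_s u_i|^2+|\p_t u_i|^2\bigr)=2\lambda_i^2|u_i|^2+2|\nabla u_i|^2 \ge 0.
$$
Summing over $i$, $f\circ u$ is subharmonic on the open set $\Omega=\{(s,t)\in\R\times S^1 \mid u(s,t)\in \R^{2n}\smallsetminus\bar B\}$.

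I would close the argument by the strong maximum principle. Because $u$ is asymptotic to one-periodic orbits inside $B$ as $s\to\pm\infty$, $\Omega$ is bounded in the $s$-direction and hence precompact in the cylinder $\R\times S^1$. On $\p\Omega$ one has $f\circ u\equiv \tfrac{1}{2}R^2$, where $R$ is the radius of $B$; if $\Omega$ were non-empty then $f\circ u$ would attain a value strictly greater than $\tfrac{1}{2}R^2$ in the interior of $\bar\Omega$, contradicting subharmonicity. Hence $\Omega=\emptyset$ and $u(\R\times S^1)\subset B$.

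The main obstacle I anticipate is the non-diagonalizable case: when $X_Q$ has Jordan blocks of size greater than one, the symplectic normal form of $Q$ contains coupling terms which produce off-diagonal contributions to $\Delta(f\circ u)$, and the naive $f$ above may fail to be subharmonic. My remedy would be to retain the same overall strategy but to choose $\jq$ and the test quadratic function blockwise, weighting the summands on each Jordan block so that the cross-terms contribute non-negatively; once subharmonicity is restored, the maximum principle step is unchanged.
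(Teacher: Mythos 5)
Your approach --- show that $\rho=\tfrac{1}{2}\|u\|^2$ is subharmonic where $u$ solves the Floer equation for $Q$, then invoke the strong maximum principle on the precompact domain $\Omega$ --- is exactly the paper's, and your complex-variable computation in the diagonalizable case is correct (it is the $E=0$ specialization of the paper's Laplacian estimate, once you note that multiplying coordinates by a complex structure with the opposite sign convention just conjugates the $\bar\partial$-operator). The genuine gap is precisely the one you flag: the non-semisimple case is left as a sketch, and your proposed remedy, while in the right spirit, does not by itself close the argument. The paper does not treat Jordan blocks as a separate case. Using Williamson's normal form it writes any hyperbolic $Q$ with real eigenvalues as $Q(p,q)=\langle Ap,q\rangle$ with $A=D+E$ lower triangular, $D$ diagonal with positive entries, $E$ strictly lower triangular, and then rescales the symplectic basis (equivalently, re-weights the coordinates, which simultaneously changes $\jq$ and the metric, as you anticipate) to make $E$ arbitrarily small \emph{relative to} $D$. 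The point your sketch misses is that the problematic terms in $\Delta\rho$ are not only quadratic cross-terms such as $\langle E^2p,p\rangle$ and $\langle (DE+ED)p,p\rangle$, which can indeed be dominated by $\|Dp\|^2+\|Dq\|^2$, but also \emph{mixed} terms $\langle p,(E-E^T)q_s\rangle-\langle q,(E-E^T)p_s\rangle$ involving the first derivatives of $u$. These cannot be made non-negative by re-weighting alone; they are controlled by borrowing from the $\|u_s\|^2$ term supplied by the Floer equation and completing the square, which is exactly what the paper's conditions (i)--(iii) on $E$ versus $\lambda=\min_i D_{ii}$ encode. Without this explicit estimate your proof does not go through for Jordan blocks of size $\ge 2$; with it, your diagonal-case argument upgrades to the paper's in full generality.
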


More generally, consider now solutions $u\colon \Omega\to\R^{2n}$ of
\eqref{eq:Fl}, where $\Omega\subset \R\times S^1$ is an open connected
subset.  Theorem \ref{thm:maxp} is an immediate consequence of the
following proposition which we will refer to as the maximum principle.

\begin{Proposition}[Maximum Principle]
\label{prop:maxp}
Let $Q$ be a hyperbolic quadratic form on $(\R^{2n}, \omega)$ with
only real eigenvalues. Then there exists a linear complex structure
$\jq$ compatible with $\omega$ such that for any solution $u$ (with
domain $\Omega\subset\R\times S^1$) of the Floer equation
\eqref{eq:Fl} for $(Q, \jq)$, the function $\rho =\| u \|^2/2$, where
the norm is induced by the metric $\langle
\cdot\,,\cdot\rangle=\omega(\cdot\,,\jq\cdot)$, cannot attain a
maximum at an interior point of $\Omega$ unless $\rho$ is constant.
\end{Proposition}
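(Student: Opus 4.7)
The plan is to choose $J_Q$ so that, for any solution $u$ of the Floer equation for $(Q,J_Q)$, the function $\rho=\|u\|^2/2$ is strongly subharmonic on $\Omega$; in fact I aim for $\Delta\rho\ge c\|u\|^2$ with some $c>0$, where $\Delta$ is the flat Laplacian on $\Omega\subset \R\times S^1$. The strong maximum principle of Hopf for subharmonic functions then immediately yields the claim.

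To fix $J_Q$, I would use a symplectic normal form for $Q$. The hyperbolicity together with the assumption of real eigenvalues produces a splitting of $\R^{2n}$ into transverse Lagrangian stable and unstable subspaces, and hence a symplectic basis in which $Q=\sum_{i=1}^n\lambda_i x_i y_i$ with each $\lambda_i\in\R\setminus\{0\}$ (Jordan blocks would require a slightly more involved normal form but can be treated analogously). Define $J_Q$ to be the standard complex structure $J_0$ in these coordinates, so that $\langle\cdot,\cdot\rangle_Q$ is the Euclidean inner product. Writing $\nabla Q(u)=Au$, the matrix $A$ is block-diagonal with $2\times 2$ blocks $\lambda_i\bigl(\begin{smallmatrix} 0&1\\ 1&0\end{smallmatrix}\bigr)$, and a direct computation gives the key anticommutation relation $J_Q A+AJ_Q=0$.

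For the central PDE computation, let $u$ solve $\partial_s u+J_Q\partial_t u=-Au$ on $\Omega$. Differentiating in $s$ and substituting the equation back in yields
\[
\partial_s^2 u = -J_Q\partial_t\partial_s u - A\partial_s u = -\partial_t^2 u + (J_Q A + A J_Q)\partial_t u + A^2 u = -\partial_t^2 u + A^2 u,
\]
where the anticommutation relation is used in the last equality. Hence $\Delta u = A^2 u$, and since $A$ is symmetric and invertible (by non-degeneracy of $Q$), $A^2$ is positive definite with smallest eigenvalue $c>0$. Therefore
\[
\Delta\rho = \|\partial_s u\|^2 + \|\partial_t u\|^2 + \langle u, A^2 u\rangle \ge c\|u\|^2 \ge 0,
\]
and the strong maximum principle forbids $\rho$ from attaining an interior maximum unless it is constant.

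The delicate step is the linear algebraic construction of a compatible $J_Q$ which additionally satisfies $J_Q A+AJ_Q=0$; it is precisely the real-eigenvalue hypothesis that allows these two requirements to be met simultaneously. For complex eigenvalues one picks up a nonzero cross term $(J_Q A+AJ_Q)\partial_t u$ in $\Delta u$, and absorbing it by a Cauchy--Schwarz estimate against $\|\partial_t u\|^2$ and $\langle u,A^2 u\rangle$ is possible only when the real parts dominate the imaginary parts of the eigenvalues, as hinted in Remark \ref{rk:complex}.
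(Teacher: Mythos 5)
Your Bochner-type computation — differentiating the Floer equation to obtain $\Delta u = (J_QA+AJ_Q)\,\partial_t u + A^2u$, and hence $\Delta\rho = \|\partial_s u\|^2+\|\partial_t u\|^2+\langle u,A^2u\rangle+\langle u,(J_QA+AJ_Q)\partial_t u\rangle$ — is exactly the paper's route, and if the anticommutation $J_QA+AJ_Q=0$ can be arranged the argument closes cleanly. The gap is that this identity is \emph{not} generally achievable under the stated hypotheses: real eigenvalues do not preclude nontrivial Jordan blocks for $X_Q$, and a Jordan block destroys the anticommutation. Take $Q=\sigma(p_1q_1+p_2q_2)-p_1q_2$, a single $4\times 4$ block with real eigenvalue $\sigma$. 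In the $(p,q)$ block splitting the Hessian is $A=\left(\begin{smallmatrix}0 & B\\ B^T & 0\end{smallmatrix}\right)$ with $B=\left(\begin{smallmatrix}\sigma & -1\\ 0 & \sigma\end{smallmatrix}\right)$, and for the standard $J_0$ adapted to these coordinates one gets $J_0A+AJ_0=\pm\left(\begin{smallmatrix}B-B^T & 0\\ 0 & B-B^T\end{smallmatrix}\right)\neq 0$ because $B$ is not symmetric. The normal form $Q=\sum\lambda_ix_iy_i$ you invoke exists only when $X_Q$ is semisimple, which the proposition does not assume; the parenthetical ``Jordan blocks \ldots\ can be treated analogously'' is precisely where the proof is missing.

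The repair — which you sketch at the end but deploy only for complex eigenvalues — is what the paper does and is needed here too. One uses the Williamson-type normal form $Q(p,q)=\langle Ap,q\rangle$ with $A$ lower triangular, splits $A=D+E$ into diagonal and strictly-lower-triangular parts, and rescales the symplectic basis (say $\partial_{p_i}\mapsto c^{-i}\partial_{p_i}$, $\partial_{q_i}\mapsto c^{i}\partial_{q_i}$ for $c$ large, which conjugates $A$ to $c^{j-i}A_{ij}$ and so shrinks $E$ while leaving $D$ fixed) to make $\|E\|$ as small as desired relative to the smallest eigenvalue $\lambda$ of $D$. The residual cross term is then of size $O(\|E\|)\,\|u\|\,\|\partial_t u\|$ and is absorbed by Cauchy--Schwarz into $\|\partial_t u\|^2$ and $\langle u,D^2u\rangle$, leaving $\Delta\rho\gtrsim\lambda^2\|u\|^2$. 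So your argument is correct when $X_Q$ is diagonalizable, but to prove the proposition as stated the rescaling step and the Cauchy--Schwarz absorption are indispensable even when all eigenvalues are real.
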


\begin{proof}[Proof of Proposition \ref{prop:maxp}]  
  Below we first introduce $\jq$ and then prove that $\rho$ is
  subharmonic on $\Omega$, i.e., $ \Delta \rho \geq 0$, where the
  Laplacian is taken with respect to metric $\omega(\cdot\,,\jq
  \cdot)$.

Since we will be changing the basis and the inner product on the
ambient space throughout the proof, it is more convenient to work with
a hyperbolic quadratic form $Q$ on a finite dimensional symplectic
vector space $(V^{2n},\omega)$. Equip $V$ with a symplectic basis
$(\p_p,\p_q)=(\p_{p_1},\cdots,\p_{p_n}, \p_{q_1},\cdots,\p_{q_n})$
such that in the corresponding coordinates $(p,q)$ on $\R^{2n}$, the
quadratic form $Q$ is expressed as
\begin{equation}
\label{eq:A}
Q(p,q)
= \langle A p, q \rangle.
\end{equation}
Here $A$ is a non-degenerate lower triangular $n\times n$ matrix, and
$\langle \cdot, \cdot \rangle$ is the standard inner product on
$\R^n$.  Indeed, since $Q$ is non-degenerate with only real
eigenvalues, it can be expressed in some symplectic basis
$(\p_p,\p_q)$, as the direct sum of the following normal forms
$$
\sigma \sum_{i=1}^m p_i q_i - \sum_{i=1}^{m-1} p_i q_{i+1},
$$
where $\sigma$ ranges over the positive eigenvalues of $Q$ and $m$ is
the multiplicity of $\sigma$; see \cite{Ar74,wi}. We emphasize that
$p$'s and $q$'s are treated here as vectors in $\R^n$ using the bases
$(\p_p)$ and $(\p_q)$, respectively. (It is clear from this formula
that $A$ is indeed lower triangular.) Note that with this choice all
diagonal entries of $A$, i.e., the eigenvalues of $A$, are positive.

Let $A=D+E$ where $D$ is the diagonal part of $A$ and $E$ is the
strictly lower triangular part. By rescaling the basis vectors
$(\p_p,\p_q)$, while still keeping the basis symplectic and keeping
\eqref{eq:A}, we can make $E$ arbitrarily small. (We will specify
shortly how small $E$ has to be. Here we merely note that the
rescaling does not effect $D$ and that, in fact, $E$ is required to be
small compared to $D$.) We keep the notation $(\p_p,\p_q)$ for the new
basis and $(p,q)$ for the resulting linear coordinates.

The complex structure $\jq$ is defined by the requirement $\jq
\p_p=-\p_q$. This structure is compatible with $\omega$, and we denote
by $\left<\cdot,\,\cdot\right>_Q$ the resulting inner product
$\omega(\cdot,\jq \cdot)$ on $V$, i.e.,
$\left<\cdot,\,\cdot\right>_Q:=\omega(\cdot,\jq \cdot)$.  From now on
we identify $(V,\omega)$ with the standard symplectic $\R^{2n}$ using
the basis $(\p_p,\p_q)$. Under this identification, $\jq$ becomes the
standard complex structure on $\R^{2n}=\C^n$, and
$\left<\cdot,\,\cdot\right>_Q$ turns into the standard inner
product. Note also that the restriction of
$\left<\cdot,\,\cdot\right>_Q$ to the subspaces generated by $\p_p$
and $\p_q$, respectively, is the inner product $\langle \cdot, \cdot
\rangle$ on $\R^n$. Finally, we emphasize that all these structures,
except for $\omega$, depend on the choice of the basis $(\p_p,\p_q)$
which is to be finalized below (after we state how small $E$ needs to
be).

In what follows, we calculate the Laplacian with respect to the metric
$\langle \cdot\,,\cdot\rangle_Q$, where we set $u(s,t)=(p,q)$ and use
the Floer equation \eqref{eq:Fl}:
\begin{equation}
\label{eq:Laplacian}
\begin{split}
\Delta \rho & = \rho_{ss} + \rho_{tt} \\
            & = {\lVert u_s \rVert}^2 + {\lVert u_t \rVert}^2 - 
                \langle u, \p_s\nabla Q(u)\rangle_Q + 
                \langle u, \jq \p_t\nabla Q(u) \rangle_Q \\
            & = {\lVert u_s \rVert}^2 + {\lVert u_t \rVert}^2
               + \langle A^2 p, p \rangle 
               + \langle A^2 q, q \rangle \\
            & \quad + \langle p,\left( A-A^T \right)q_s \rangle
                    - \langle q,\left( A-A^T \right)p_s \rangle \\            
            & = {\lVert u_s \rVert}^2 + {\lVert u_t \rVert}^2 
              + {\lVert Dp \rVert}^2 + {\lVert Dq \rVert}^2  
              + \langle E^2 p, p \rangle + \langle E^2 q, q \rangle\\
            & \quad + \langle \left( DE+ED \right)\, p, p \rangle 
                    + \langle \left( DE+ED \right)\, q, q \rangle \\
            & \quad + \langle p,\left( E-E^T \right)q_s \rangle
                    - \langle q,\left( E-E^T \right)p_s \rangle.
\end{split}
\end{equation}

Next, we specify the requirements on $E$. To this end, let $ \lambda
:= \min \lambda_i >0$, where $\lambda_i$'s are the
eigenvalues of $A$ (or $D$). Then we have
\begin{equation}
\label{eq:lambda}
\|Dx\|^2 \geq \lambda^2 \| x \|^2
\text{ for any }x \in \R^n.
\end{equation}
Now, $E$ is required to be so small that (i), (ii) and (iii) below
hold:
\begin{itemize}
\item[(i)]
$
\lvert \langle E^2 x, x \rangle \rvert \le 
\dfrac{\lambda^2}{10} {\lVert x \rVert}^2 
\text{ for any }x \in \R^n 
$,
\vspace{.1in}
\item[(ii)] 
$
\lvert \langle DE \,x, x \rangle \rvert \le
\dfrac{\lambda^2}{20}{\lVert x \rVert}^2
\text{ and }
\lvert \langle ED \,x, x \rangle \rvert \le
\dfrac{\lambda^2}{20}{\lVert x \rVert}^2
\text{ for any }x \in \R^n 
$,
\vspace{.1in}
\item[(iii)] 
$
\lvert \langle x , \left( E-E^T \right)y \rangle \rvert \le
\dfrac{\lambda}{8}{\lVert x \rVert} {\lVert y \rVert} 
\text{ for any }x \text{ and }y \in \R^n 
$.
\vspace{.1in}
\end{itemize}
Using \eqref{eq:lambda}, and (i) and (ii) for $x=p$ and $x=q$, and
(iii) for $(x,\,y)=(p,\,q_s)$ and $(x,\,y)=(q,\,p_s)$ in
\eqref{eq:Laplacian}, it is straightforward to show that
\begin{equation}
\label{eq:Laplacian2}
\begin{split}
\Delta \rho & \geq \dfrac{3 \lambda^2}{10} {\lVert u \rVert}^2
            + {\lVert u_s \rVert}^2 
            + \dfrac{\lambda^2}{2} {\lVert u \rVert}^2 
            - \dfrac{\lambda}{4}{\lVert u \rVert}{\lVert u_s \rVert}\\
            & \geq \dfrac{3 \lambda^2}{10} {\lVert u \rVert}^2 +
             {\left(\lVert u_s \rVert - 
             \dfrac{\lambda}{\sqrt{2}}{\lVert u \rVert} \right)}^2\\
            & \geq \dfrac{3 \lambda^2}{10} {\lVert u \rVert}^2 \geq 0. 
\end{split}
\end{equation}
\end{proof}

\begin{Remark}
\label{rk:complex}
It is not hard to see that Proposition \ref{prop:maxp} still holds
when the quadratic form $Q$ has complex eigenvalues $\sigma$, provided
that $\lvert \text{Re } \sigma\rvert > \lvert \text{Im } \sigma \rvert
$ or, equivalently, $\text{Re } \sigma^2>0$ for all
eigenvalues. However, in general, without this assumption (or when $Q$
is elliptic, but not positive definite), there seems to be no reason
to expect the maximum principle to hold. There are also several other
variants of the maximum principle which hold for solutions of the
Floer equation for $Q$. For instance, it holds for the functions $\|
p\|^2$ and $\| q\|^2$ separately.
\end{Remark}

As a consequence of Theorem \ref{thm:maxp}, the total and filtered
Floer homology groups of $H\in \hq$, denoted by $\HF(H)$ and
$\HF^{(a,\,b)}(H)$, respectively, are defined and have properties
similar to those for closed symplectically aspherical manifolds; see,
e.g, \cite{HZ,MS}.  (For the sake of simplicity all homology groups
are taken over $\Z_2$.)  Likewise, the local Floer homology $\HF
(H,\gamma)$ of $H$ at an isolated periodic orbit $\gamma$ is also
defined and has the usual properties; see, e.g.,
\cite{F:89witten,Fl89fp,Gi:conley,GG:gap}.  (Here $J$ is an
$\omega$-compatible almost complex structure which is generic within
the class of almost complex structures equal outside a compact set to
some $\jq$ as in Theorem \ref{thm:maxp}. We will discuss the
dependence of the Floer homology on $J$ shortly.)

Our next goal is to define the continuation maps induced by homotopies
of Hamiltonians in $\hq$. To this end, we say that a homotopy
$F_s=Q+f_s$ in $\hq$ from $H_0$ to $H_1$ is compactly supported if
$\bigcup_s \supp f_s$ is bounded. (Observe that this is not
automatically the case.) Then we have a continuation map
\begin{equation}
\label{eq:homotopy}
\Psi\colon \HF^{(a,\,b)}(H_0) \to \HF^{(a,\,b)+C}(H_1)
\end{equation}
for any $H_0$ and $H_1$ in $\hq$, induced by a homotopy $F_s$ in
$\hq$. Here $(a,\,b)+C$ stands for $(a+C,\,b+C)$ and
\begin{equation}
\label{eq:C}
C \geq \int_{-\infty}^{\infty} \int_{S^1} 
\sup_{\R^{2n}} \partial_s F_s \,dt\,ds
=
\int_{-\infty}^{\infty} \int_{S^1} 
\sup_{\R^{2n}} \partial_s f_s \,dt\,ds
\end{equation}
with $\partial_s F_s \equiv 0$ when $|s|$ is large; see \cite[Section
3.2.2]{Gi:coiso}. Note that the supremum in \eqref{eq:C} exists since
the homotopy is compactly supported.

We now have $\HF(H) = \HF(Q) \cong \Z_2 $, concentrated in degree
$\MUCZ(Q,0)=0$. It is clear that the filtered Floer homology
$\HF^{(a,\,b)}(H)$ is independent of the almost complex structure $J$
as long as $\jq$ is fixed. However, it is not obvious at all whether
this homology is independent of the choice of $\jq$. In what follows,
we will always have $\jq$ fixed and suppress this hypothetical
dependence in the notation.

\subsection{Continuation maps beyond $\hq$}
\label{sec:homotopy}
The class $\hq$ is not closed under iterations. For instance, $H^
{\nat 2} \in \CH_{\scriptscriptstyle{2Q}}$ when $H \in \hq$. To
incorporate iterations into the picture, we consider a broader class
$\hhq$ which is the union of the classes $
\CH_{\scriptscriptstyle{kQ}} $ for all real $k>0$. Clearly, this class
is now closed under iterations. Moreover, one can see from the proof
of Proposition \ref{prop:maxp} that there exists a common almost
complex structure, $\jq$, for which Theorem \ref{thm:maxp} holds for
all Hamiltonians in $\hhq$ or, to be more precise, any $\jq$ can also
be taken as $\jkq$.  (The reason is that $\jq$ is determined by the
requirement that the off-diagonal part $E$ of $A$ is small
\emph{compared to} $D$, rather than just small. Thus, if conditions
(i), (ii) and (iii) are satisfied for $Q$, they are also automatically
satisfied in the same basis for $kQ$ for any $k>0$.)  From now on we
fix $\jq$.

As above, we say that a homotopy $F_s=k(s)Q+f_s$ in $\hhq$ is
compactly supported if $\bigcup_s \supp f_s$ is bounded and call the
closure of this union the support of the homotopy.  A homotopy is
called \emph{slow} if it is compactly supported and, say,
\begin{equation}
\label{eq:slow}
\frac{|k'(s)|}{(k(s))^2}\leq \frac{3\lambda^2}{20}
\inf_{x\in \R^{2n} \setminus \{0\} }\frac{\|x\|^2}{|Q(x)|},
\end{equation}
where $\lambda=\min \lambda_i $ is as in Section \ref{sec:Floer}; cf.\
\cite{cfh}. Clearly, the right hand side in \eqref{eq:slow} is
positive. (In fact, the infimum in \eqref{eq:slow} is equal to
$1/\lambda_{\max}$ where $\lambda_{\max}$ is the largest of the
absolute values of the eigenvalues of $Q$ with respect to
$\|x\|^2$. This follows from the Courant--Fischer minimax theorem;
see, e.g., \cite[Chapter 5]{De}.) Recall also that a homotopy $F_s$
from $H_0$ to $H_1$ is called linear if
$F_s=\left(1-g(s)\right)H_0+g(s)H_1$, where $g \colon \R \to \R$ is an
increasing smooth function equal to zero for $s \ll 0$ and one for
$s\gg 1$.

\begin{Theorem}
\label{thm:homotopy}
Let $F_s$ be a slow homotopy in $\hhq$ from $H_0$ to $H_1$, supported
in a ball $B$ with respect to the metric
$\left<\cdot\,,\cdot\right>_Q=\omega(\cdot\,,\jq\cdot)$. The
continuation map $\Psi$ as in \eqref{eq:homotopy} is defined, where
$C$ satisfies
\begin{equation}
\label{eq:C2}
C \geq \int_{-\infty}^{\infty} \int_{S^1} 
\sup_{B} \partial_s F_s \,dt\,ds
\end{equation}
This map is independent of the slow homotopy. Furthermore, for a
linear slow homotopy, we can take 
$C = \| H_1 - H_0 \|_B 
:= \int_{S^1}\sup_B |H_1 - H_0 |\, dt
$.
\end{Theorem}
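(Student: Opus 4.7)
The plan is to extend the maximum principle of Proposition \ref{prop:maxp} from the autonomous case (Hamiltonian $Q$) to the slow-homotopy case (Hamiltonian $F_s = k(s)Q + f_s$) and then invoke standard Floer-theoretic machinery (energy identity, chain-homotopy argument for independence). Once the parametrized moduli spaces are known to be compact, existence of $\Psi$, the shift \eqref{eq:C2}, and independence from the choice of slow homotopy all follow as in the closed symplectically aspherical setting, cf.\ \cite{HZ,MS}.

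The main step is the analogue of Proposition \ref{prop:maxp} for slow homotopies. Let $u$ solve the Floer equation $\partial_s u + J \partial_t u + \nabla F_s(u) = 0$ with $F_s = k(s)Q + f_s$, and set $\rho = \|u\|^2/2$ in the metric $\langle\cdot,\cdot\rangle_Q$. At $(s,t)$ outside $\supp f_s$ the Hamiltonian is $k(s)Q$, so the Laplacian calculation of Proposition \ref{prop:maxp} applied to the rescaled form $k(s)Q$ applies verbatim, except that differentiating the Floer equation in $s$ produces one additional term, $k'(s)\nabla Q(u)$ inside $u_{ss}$; by Euler's identity for the homogeneous degree-two function $Q$ this contributes $-k'(s)\langle u, \nabla Q(u)\rangle_Q = -2k'(s)Q(u)$ to $\rho_{ss}$. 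Since the positive terms from Proposition \ref{prop:maxp}, scaled by $k(s)^2$, are bounded below by $\tfrac{3\lambda^2}{10}k(s)^2\|u\|^2$, one obtains
$$
\Delta\rho \;\geq\; \frac{3\lambda^2}{10}\, k(s)^2 \|u\|^2 \;-\; 2|k'(s)|\,|Q(u)|,
$$
and the right-hand side is non-negative precisely under the slow condition \eqref{eq:slow}. Taking $B$ large enough to contain also the one-periodic orbits of $H_0$ and $H_1$ (which lie in a bounded region since outside the support of the respective perturbations the Hamiltonians equal $k_iQ$, hyperbolic with only the origin as a periodic orbit), the maximum principle for this subharmonic function forces any continuation trajectory asymptotic to orbits in $B$ to be contained in $B$.

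With compactness secured the rest is standard. The energy identity
$$
\CA_{H_0}(u(-\infty,\cdot)) - \CA_{H_1}(u(+\infty,\cdot)) \;=\; \int_{\R\times S^1}\!\!\|\partial_s u\|^2\,dt\,ds \;-\; \int_{\R\times S^1}\!\!\partial_s F_s(u)\,dt\,ds,
$$
together with $u\subset B$ and the non-negativity of the first integral on the right, yields the shift \eqref{eq:C2}. Independence of $\Psi$ from the slow homotopy follows from the usual chain-homotopy argument applied to a two-parameter family of slow homotopies connecting any two given ones, which exists after, if necessary, reparametrizing in $s$ so as to enforce \eqref{eq:slow} uniformly in the parameter. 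For a linear slow homotopy $F_s=(1-g(s))H_0 + g(s)H_1$, one has $\partial_s F_s = g'(s)(H_1-H_0)$ with $g'\geq 0$ and $\int_\R g'(s)\,ds = 1$, whence
$$
\int_\R\!\!\int_{S^1}\sup_B \partial_s F_s\,dt\,ds \;\leq\; \int_\R g'(s)\,ds \cdot \int_{S^1}\sup_B |H_1-H_0|\,dt \;=\; \|H_1-H_0\|_B.
$$

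The main obstacle is the Laplacian estimate in the second paragraph: the inhomogeneous term $-2k'(s)Q(u)$ generated by the $s$-dependence of $F_s$ must be absorbed into the positive terms coming from Proposition \ref{prop:maxp} applied to $k(s)Q$, and the slow condition \eqref{eq:slow} is exactly the quantitative requirement that makes this absorption possible. The positivity of those terms, as in Proposition \ref{prop:maxp}, rests on the basis-dependent choice of $\jq$ with the off-diagonal part $E$ small compared to the diagonal part $D$; crucially, this smallness is preserved under $Q\mapsto k(s)Q$, so the same $\jq$ works along the entire homotopy, allowing the estimate to be applied at every $s$ simultaneously.
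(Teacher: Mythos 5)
Your proof is correct and follows essentially the same route as the paper: extend the maximum principle of Proposition \ref{prop:maxp} to slow homotopies $k(s)Q$ by tracking the extra term $-2k'(s)Q(u)$ in $\Delta\rho$ and absorbing it via the slow condition \eqref{eq:slow}, then invoke compactness plus the standard energy identity for the action shift and the chain-homotopy argument for independence. The paper leaves the energy-identity calculation implicit (citing \cite{Gi:coiso,sc}), whereas you spell it out; otherwise the two arguments coincide.
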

 
It is not hard to see that any compactly supported homotopy in $\hhq$
can be reparametrized to make it slow without changing the right hand
side in \eqref{eq:C2}. Note also that, although the notion of a slow
homotopy is independent of the size of the support, the lower bound in
\eqref{eq:C2} does depend in general on the ball $B$ containing the
support and increases with the size of $B$. However, one can show that
the continuation map $\Psi$ is independent of the ball $B$ in the
following sense: whenever for a fixed homotopy and two different balls
$C$ satisfies \eqref{eq:C2} for both of the balls, the resulting
continuation map is independent of the ball. (In what follows we will
not use this fact.)

The continuation maps $\Psi$ have properties similar to their
counterparts in the ordinary Floer homology. For instance, the
continuation map induced by a concatenation of homotopies is equal to
the composition of the continuation maps, and continuation maps
commute with the maps in the long exact sequence in filtered Floer
homology. (See \cite{Gi:coiso} for a detailed account on the so-called
$C$-bounded homotopies in filtered Floer homology.) Note, however,
that here, as in \eqref{eq:homotopy}, the almost complex structure
$J_s$ is independent of $s$ outside $B$.

\begin{Remark}
\label{rem:shift}
We emphasize that the continuation map $\Psi$ is not necessarily
defined when the homotopy is not slow.
\end{Remark}

\begin {proof}[Proof of Theorem \ref{thm:homotopy}]
  To prove that $\Psi$ is well-defined, it suffices to show that the
  maximum principle, Proposition \ref{prop:maxp}, extends to solutions
  of the Floer equation for slow homotopies $Q_s=k(s)Q$ connecting
  $Q_0=k_0Q$ and $Q_1=k_1Q$, where $k_0=k(0)$ and $k_1=k(1)$. To this
  end, note that $Q_s = k(s) \langle A p, q \rangle$ and recall that,
  as was noted above, we can take $\jkq$ to be $\jq$. 
  Calculating the Laplacian with respect to the metric
  $\left<\cdot\,,\cdot\right>_Q=\omega(\cdot\,,\jq\cdot)$
 in this setting, we obtain
\begin{equation*}
\begin{split}
\Delta \rho & = {\lVert u_s \rVert}^2 + {\lVert u_t \rVert}^2
              + k(s)\left(\langle p,\left( A-A^T \right)q_s \rangle
              - \langle q,\left( A-A^T \right)p_s \rangle \right)\\
            & \quad + (k(s))^2 \left( \langle A^2 p, p \rangle 
                    + \langle A^2 q, q \rangle \right) 
                    - 2 k'(s) \langle Ap, q\rangle \\  
            & = {\lVert u_s \rVert}^2 + {\lVert u_t \rVert}^2 
              + (k(s))^2 \left( {\lVert Dp \rVert}^2 
              + {\lVert Dq \rVert}^2 
              + \langle E^2 p, p \rangle 
              + \langle E^2 q, q \rangle \right)\\
            & \quad + (k(s))^2 \left(
                      \langle \left( DE+ED \right)\, p, p \rangle 
                    + \langle \left( DE+ED \right)\, q, q \rangle \right)\\
            & \quad + k(s)\left( \langle p,\left( E-E^T \right)q_s \rangle
                    - \langle q,\left( E-E^T \right)p_s \rangle \right)\\
            & \quad  - 2 k'(s) \langle Ap, q\rangle \\
            & \ge \frac{3 \lambda^2}{10} (k(s))^2{\lVert u \rVert}^2
              + \left(
                \lVert u_s \rVert - \frac{\lambda \lvert k(s) \rvert} 
                {\sqrt{2}} \lVert u \rVert
                \right)^2 
              -2 \lvert k'(s) \rvert \cdot \lvert Q(u) \rvert.
\end{split}
\end{equation*}
Hence $\Delta\rho\geq 0$ by \eqref{eq:slow}.

Now, the facts that we can take $C$ satisfying \eqref{eq:C2} for a
general slow homotopy and that $C = \| H_1- H_0 \|_B$ satisfies
\eqref{eq:C2} for a linear slow homotopy are established by a standard
calculation (see, e.g., \cite{Gi:coiso,sc}) combined with the
observation that homotopy trajectories (i.e., solutions of
\eqref{eq:Fl} for the pair $(F_s, J_s)$) are confined to $B$ due to
the maximum principle.
\end{proof}

\begin{Remark}
\label{rk:posdef}
  The maximum principle is also known to hold for positive definite
  quadratic Hamiltonians; see \cite{McD,vi} and also \cite{Sei}. This
  fact underlies the definition of symplectic homology and, in fact,
  it was the motivation of our approach in this paper. However, it is
  worth pointing out that in this case the continuation map between a
  Hamiltonian equal to $kQ$ at infinity and the one equal to $(k+1)Q$
  at infinity is defined only in one direction and this map, depending
  on $Q$, may be zero. This is the main reason why our approach to the
  proof of Theorem \ref{thm:main2} does not carry over to positive
  definite quadratic Hamiltonians.
\end{Remark}

\section{Proofs and generalizations}
\label{sec:proofs}

\subsection{Proof of Theorem \ref{thm:main2}}
\label{sec:proof-main}
As has been mentioned in the introduction, we establish a more general
result. To state it, recall again that an isolated periodic orbit $x$
is said to be homologically non-trivial if the local Floer homology of
$H$ at $x$ is non-zero. For instance, a non-degenerate fixed point is
homologically non-trivial. More generally, an isolated fixed point
with non-vanishing topological index is homologically non-trivial; for
this index is equal, up to a sign, to the Euler characteristic of the
local Floer homology.  The notion of homological non-triviality seems
to be particularly well-suited for use in the context of HZ- and
Conley conjectures; see, e.g., Remark \ref{rk:master}. Theorem
\ref{thm:main2} is an immediate consequence of the following result.

\begin{Theorem}
\label{thm:main}
Let $H \colon S^1 \times \R^{2n} \to \R$ be a Hamiltonian which is
equal to a hyperbolic quadratic form $Q$ at infinity (i.e., outside a
compact set) such that $Q$ has only real eigenvalues. Assume that
$\varphi_H$ has an isolated homologically non-trivial fixed point $x$
with non-zero mean index and $\Fix(\varphi_H)$ is finite.  Then
$\varphi_H$ has simple periodic orbits of arbitrarily large period.
\end{Theorem}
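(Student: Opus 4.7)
I would argue by contradiction. Suppose $\varphi_H$ has only finitely many simple periodic orbits; then their periods are bounded by some $N$, so for every prime $p > N$ every $p$-periodic point of $\varphi_H$ is actually a fixed point, and the generators of the Floer complex $\CF_*(H^{\nat p})$ are exactly the iterations $x_i^p$ of the finitely many fixed points $x_1 = x, x_2, \ldots, x_m$. This reduction is the only place in the proof where the finiteness of simple orbits of bounded period will be used.

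The key ingredient is the persistence of local Floer homology under iteration (Ginzburg--Gurel). Since $x$ is homologically non-trivial with $\Delta_H(x) \neq 0$, there is an infinite sequence of admissible iterations $p_j \to \infty$ along which $x^{p_j}$ is isolated and $\HF(H^{\nat p_j}, x^{p_j}) \neq 0$, supported in degrees within $n$ of $\MUCZ(H^{\nat p_j}, x^{p_j})$, which in turn is within $n$ of $p_j \Delta_H(x)$ and therefore runs off to $\pm \infty$. On the other hand, the framework developed in Section \ref{sec:maxp}, together with a slow homotopy in $\hhq$ from $H^{\nat p}$ to $pQ$ supplied by Theorem \ref{thm:homotopy}, yields $\HF(H^{\nat p}) \cong \HF(pQ) \cong \Z_2$, concentrated in degree zero.

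The contradiction is then extracted as follows. For large $p_j$ the non-vanishing local class contributed by $x^{p_j}$ sits in some degree $d_j \neq 0$, and hence must be killed in the total Floer homology. Under the finiteness assumption, the only candidate cancelling generators are other iterations $x_i^{p_j}$, and in order for their Conley--Zehnder indices to lie within $n$ of $d_j$ for infinitely many $p_j$, one is forced to have $\Delta_H(x_i) = \Delta_H(x)$. Sharpening by action filtration --- the actions $p_j \CA_H(x_i)$ form finitely many arithmetic progressions in $p_j$, while the persistent local class of $x^{p_j}$ survives inside a narrow action window around $p_j \CA_H(x)$ --- then shows that the required cancellation cannot be consistently produced from the finite pool $x_1, \ldots, x_m$ uniformly in $p_j$, completing the contradiction.

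I expect the main obstacle to lie in carefully adapting the Floer-theoretic machinery (persistence of local Floer homology and the action-filtered continuation arguments) from the standard closed symplectically aspherical setting to the non-compact quadratic-at-infinity setting. Compactness of moduli spaces is already handled by the maximum principle via the distinguished $\jq$, and continuation maps between different iterations are controlled by the slow homotopies of Section \ref{sec:homotopy}; persistence itself is a germ-level statement and should transfer with minimal modification. The genuinely new feature, which is what makes the dynamical part of the argument run cleanly on $\R^{2n}$, is that the role played in standard Conley-conjecture arguments by the quantum cohomology of the ambient closed manifold is here replaced by the trivial computation $\HF(kQ) \cong \Z_2$ in degree zero.
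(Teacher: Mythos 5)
Your outline correctly identifies the main ingredients---persistence of local Floer homology under admissible iterations, the concentration of the total Floer homology $\HF(H^{\nat p})\cong\Z_2$ in degree zero, and action filtration---and this is indeed the general shape of the paper's argument. However, there are two genuine gaps.

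First, the final step of your proposal is not a proof. The claim that ``the required cancellation cannot be consistently produced from the finite pool $x_1,\dots,x_m$ uniformly in $p_j$'' is exactly the kind of statement that needs a mechanism, and you do not supply one. In fact, if some other fixed point $x_i$ happens to have $\CA_H(x_i)=\CA_H(x)$ and $\Delta_H(x_i)=\Delta_H(x)$, then the actions and index windows of $x_i^{p_j}$ and $x^{p_j}$ \emph{do} match for every $p_j$, so a naive ``no orbit can chase the degree'' argument fails. The paper's actual mechanism is a commuting triangle of action-filtered continuation maps relating two large iterations $p_i$ and $p_{i+m}$: one chooses $m>n/\Delta_H(x)$ so that, at a degree $s_i$ where the local homology of $x^{p_i}$ is nonzero, the supports of \emph{all} relevant local homologies at iteration $p_{i+m}$ miss $s_i$; then the quotient-inclusion isomorphism from $\HF^{(-\alpha,\alpha)}_{s_i}(\tH^{\nat p_i})$ to $\HF^{(-\alpha,\alpha)+2\delta}_{s_i}(\tH^{\nat p_i})$ is shown to factor through the zero group $\HF^{(-\alpha,\alpha)+\delta}_{s_i}(\tH^{\nat p_{i+m}})$. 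This is a sharp, finite argument, not a uniformity-in-$p_j$ argument, and it is the step your proposal leaves vague.

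Second, and more substantively, you cannot run that continuation argument (nor any honest version of your ``narrow action window'' step) without controlling the action shift $\delta$ of the continuation maps between iterations \emph{uniformly in the iteration order}. This is not automatic: for a hyperbolic $Q$, the Hamiltonian $H^{\nat k}=kQ+h_k$ has $h_k$ supported in a ball whose radius grows like $\|\varphi_Q^{k-1}\|$, i.e.\ exponentially in $k$, and the resulting lower bound \eqref{eq:C2} for the continuation constant would blow up. The paper's fix is the modified Hamiltonian $\tQ$ of Lemma~\ref{lemma:Qtilde}: for each small $\eps$, one interpolates $Q$ to $\eps Q$ outside a ball of radius $C_1/\sqrt{\eps}$ while keeping $\sup|\tQ|$ bounded by a constant $C_2$ independent of $\eps$, and then $\tH=\tQ+f$ has the same periodic orbits, actions and indices as $H$. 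Lemma~\ref{lemma:shift} then gives $\|\tH^{\nat(k+\ell)}-\tH^{\nat k}\|_{B_{k+\ell}}\leq C_3\ell$ with $C_3$ independent of $k$, $\ell$ and $\eps$ (after choosing $\eps$ small enough that the hyperbolic expansion over time $\eps(k+\ell)$ is bounded). Only with this uniform estimate, plus the prime-gap bound $p_{i+m}-p_i=o(p_i)$ from~\cite{BHP}, can one choose the window $(-\alpha,\alpha)$ so that the continuation maps have a shift $\delta=C_3(p_{i+m}-p_i)$ small relative to $p_i a$ and the spectral gap around $p_i\CA_H(x)$. Your proposal omits the $\tQ$ construction, the uniform shift estimate, and the number-theoretic input entirely, and without them the continuation-map argument---and hence the theorem---does not go through.
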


\begin{proof}[Proof of Theorem \ref{thm:main}]
In what follows, for the sake of brevity, we suppress the
$t$-dependence when taking a supremum or specifying the support of a
function. For instance, when we say that a function is supported in $Y
\subset \R ^{2n}$, we mean that the support is in $ S^1 \times Y$.
Likewise, two functions are equal on $Y$ means that they are equal on
$S^1 \times Y$, etc. Finally, the supremum, without a set specified,
will stand for the supremum over $\R^{2n}$.

Let $H=Q+f$ as in the statement of the theorem. Pick a polyball
$P=B^n\times B^n$ containing $\supp f$ and a ball $V \supset P$.
Throughout the proof, as in Section \ref{sec:maxp}, we assume that the
off-diagonal part $E$ of $A$ is small enough when compared to the
diagonal part $D$. In particular, every integral curve of the flow of
$Q$ intersect $P$ along a connected set.  Before we actually turn to
the proof of the theorem, we need to first modify $H$, without
essentially changing its dynamics, to control the energy shift
resulting from the homotopy between different iterations of~$H$.

\begin{Lemma}
\label{lemma:Qtilde}
There exist constants $C_1 >0$ and $C_2>0$, depending only on the
quadratic form $Q$ and the ball $V$, such that for every $\eps \in
(0,1]$ there exists an autonomous Hamiltonian $\tQ$ with the following
properties:
\begin{itemize}
\item[(i)] $\tQ=Q$ on $V$,
\item[(ii)] $\tQ = \eps Q$ outside a ball $V_\eps \supset V$ of radius
  $R=C_1/ \sqrt{\eps}$,
\item[(iii)] $\displaystyle \sup_{V_\eps} \lvert \tQ \rvert = C_2$,
\item[(iv)] The Hamiltonian flow of $\tQ$ has no periodic orbits other
  than the origin, and every integral curve of its flow intersects $P$
  along a connected set.
\end{itemize}   
\end{Lemma}
The essential point here is that the constants $C_1$ and $C_2$ are
independent of $\eps$ while $R=C_1/\sqrt{\eps}$ (but not, say, of
order $1/\eps$). We will prove this lemma by giving an explicit
construction of $\tQ$ after the proof of Theorem \ref{thm:main}. One
can think of $\tQ$ as a family of Hamiltonians smoothly parametrized
by $\eps$ with $\tQ=Q$ for $\eps=1$.

Consider now the Hamiltonian
\begin{equation*}
\label{eq:Htilde}
\tH = \tQ + f = \eps Q + ( \tQ - \eps Q ) + f = \eps Q + h,
\end{equation*}
where $h = (\tQ - \eps Q ) + f $ is supported in $V_\eps$.  Observe
that $\tH \in \hhq$. Furthermore, $\tH=H$ in $V$, the ball where the
Hamiltonians have non-trivial dynamics. Moreover, for every period,
$\tH$ and $H$ have exactly the same periodic orbits by Lemma
\ref{lemma:Qtilde} (iv), and the orbits have the same actions and
indices.  In fact, one might expect these Hamiltonians to have exactly
the same filtered Floer homology with isomorphism induced by a slow
linear homotopy. However, we have not been able to prove this fact.

The next lemma concerns the iterations $\tH^{\nat k}$ and an estimate,
independent of $k$, of the difference $\tH^{\nat {(k+\ell})} -
\tH^{\nat k}$, which will be essential for the proof of
Theorem~\ref{thm:main}.

\begin{Lemma}
  \label{lemma:shift} The Hamiltonian $\tH^{\nat k}$ satisfies the
  following conditions:
\begin{itemize}
\item[(i)]$\tH^{\nat k} \in \hhq$ and is equal to $k\eps Q$ outside
  the ball $B_k$ of radius $\| \varphi^{\eps (k-1)}_Q \|R $ centered
  at the origin, where $\varphi^{\eps (k-1)}_{Q}$ is viewed as a
  linear operator.
\item[(ii)] Assume that $k$, $\ell$ and $\eps$ are such that $\|
  \varphi^{\eps(k+\ell-1)}_{Q}\| \leq 2$. Then
\begin{equation}
\label{eq:shift}
\| \tH^{\nat {(k+\ell)}} - \tH^{\nat k} \|_{B_{k+\ell}} \leq C_3 \ell,
\end{equation}
where $C_3$ is independent of $k$, $\ell$ and $\eps$, and the norm is
as defined in \eqref{eq:norm}.
\end{itemize}
\end{Lemma}

\begin{proof}[Proof of Lemma \ref{lemma:shift}]
Denote by $B_1$ the ball $V_\eps$ from Lemma \ref{lemma:Qtilde}, i.e.,
$B_1$ is the ball of radius $R=C_1/\sqrt{\eps}$ centered at the
origin. Consider the nested sets
$$
Y_k= \bigcup_{t \in [0,\,1]} \varphi_{\eps Q}^{(k-1)t}(B_1) 
\text{ for } k \in \N. 
$$ 
Let $B_k=B(R_k)$ be the ball of radius $R_k = \| \varphi^{(k-1)}_{\eps
  Q} \|R$. Clearly, $B_k \supset Y_k$.

Recall that $\tH=\eps Q + h$, where $h = (\tQ - \eps Q ) + f $ is
supported in $B_1$. Observe that $\tH^{\nat k}$ can be expressed as
\begin{equation*}
\label{eq:H-iteration}
\tH^{\nat k} = 
k \eps Q + \sum_{j=0}^{k-1} h \circ (\varphi_{\tH}^t)^{-j} +
\eps \sum_{j=0}^{k-1} \left(Q \circ (\varphi_{\tH}^t)^{-j} - Q \right)
= k \eps Q + h_k,
\end{equation*}
We now show that $\supp h_k \subset Y_k$, which settles (i). Since
$\supp h \subset B_1$, a point $x$ can be in $\supp (h \circ
(\varphi_{\tH}^t)^{-j} )$ only if $ (\varphi_{\eps Q}^{\tau})^{-j}(x)
\in B_1 $ for some $\tau \in [0,\,t]$.  This implies that
$$
x \in \left( (\varphi_{\eps Q}^{\tau} )^{-j} \right)^{-1} (B_1) =
\varphi_{\eps Q}^{j \tau} (B_1) \subset \bigcup_{t \in [0,\,1]}
\varphi_{\eps Q}^{jt}(B_1) = Y_j.
$$
Hence, the
first term in $h_k$ is supported in $Y_k$.  Dealing with the second
term in $h_k$, we first note that
$$
\eps \big( Q \circ (\varphi_{\tH}^t)^{-j} - Q \big)
= \eps \big( Q \circ (\varphi_{\tH}^t)^{-j} 
- Q \circ (\varphi_{\eps Q}^t)^{-j} \big)
$$
since $Q$ is autonomous. Now, it is clear that $
(\varphi_{\tH}^t)^{-j}(x) \neq (\varphi_{\eps Q}^t)^{-j} (x) $ only
when the integral curve of $\eps Q$ through $x$ for $[-jt,\, 0]$
enters $B_1$, i.e., $ (\varphi_{\eps Q}^{\tau})^{-j}(x) \in B_1$ for
some $\tau \in [0,\,t]$. Hence, similarly to the first term, the
second term in $h_k$ is also supported in $Y_k$, and we have $\supp
h_k \subset Y_k$.

To establish (ii), denote by $B(1)$ the unit ball and observe that for
any $j= k, \cdots, k+\ell-1$, we have
\begin{equation*}
\begin{split}
\sup_{B_{k+\ell}} \big| h \circ (\varphi_{\tH}^t)^{-j} \big|
&=
\sup_{B_1} | \tQ - \eps Q + f | \\
&\leq 
\sup_{B_1} |\tQ| + \eps \sup_{B_1} |Q| + \sup |f| \\
&\leq 
C_2 + \eps \frac{C_1^2}{\eps} \sup_{B(1)} |Q| +  \sup |f|\\
&=C_2 + C_1^2 \sup_{B(1)} |Q| +  \sup |f|.
\end{split}
\end{equation*}
Furthermore, by the energy conservation law, we have
\begin{equation*}
\begin{split}
\sup_{B_{k+\ell}} \big| Q \circ (\varphi_{\tH}^t)^{-j} - Q \big|
&=
\sup_{B_{k+\ell}} \big| Q \circ (\varphi_{\tH}^t)^{-j} 
                     - Q \circ (\varphi_{\eps Q}^t)^{-j} \big|\\
&=
\sup_{B_1} \big| Q \circ (\varphi_{\tH}^t)^{-j} 
              - Q \circ (\varphi_{\eps Q}^t)^{-j} \big|\\
& \leq 2 \sup_{B_1} |Q| \\
& \leq 2 \frac{C_1^2}{\eps} \sup_{B(1)} |Q|.
\end{split}
\end{equation*}
Now, recall that $k$, $\ell$ and $\eps$ are such that $\|
\varphi^{\eps(k+\ell-1)}_{Q}\| \leq 2$.  Thus $B_{k+\ell} \subset 2
B_1$. Setting $M = \displaystyle \sup_{B(1)} |Q|$ and using the above
estimates, we then have:
\begin{equation*}
\begin{split}
\sup_{B_{k+\ell}} | \tH^{\nat {(k+\ell)}} - \tH^{\nat k} |
  & = \sup_{B_{k+\ell}} \Big| \ell \eps Q +
      \sum_{j=k}^{k+\ell-1} h \circ (\varphi_{\tH}^t)^{-j}
      + \eps \sum_{j=k}^{k+\ell-1} 
      \left( Q \circ (\varphi_{\tH}^t)^{-j} - Q \right) \Big|\\
  & \leq \ell \eps \sup_{2B_1} |Q| 
       + \sum_{j=k}^{k+\ell-1} \sup_{B_{k+\ell}} \Big| h \circ
       (\varphi_{\tH}^t)^{-j}\Big| \\
       & \quad + \eps  \sum_{j=k}^{k+\ell-1} \sup_{B_{k+\ell}} \Big| 
         \left( Q \circ (\varphi_{\tH}^t)^{-j} - Q \right) \Big|\\
  & \leq 4 \ell \eps \frac{C_1^2}{\eps} M
       + \ell \left( C_2
       +  C_1^2 M 
       + \sup|f| \right)
       + 2 \ell \eps \frac{C_1^2}{\eps} M \\
  & \leq 
\big(7 C_1^2 M + C_2 + \sup |f|\big) \ell
=: C_3 \ell
\end{split}
\end{equation*}
where $C_3$ is independent of $k$, $\ell$ and $\eps$. Finally, we have
\begin{equation}
\| \tH^{\nat {(k+\ell)}} - \tH^{\nat k} \|_{B_{k+\ell}} 
=
\int_{S^1} \sup_{B_{k+\ell}} | \tH^{\nat {(k+\ell)}} - \tH^{\nat k} | dt
\leq
C_3\ell,
\end{equation}
which concludes the proof of Lemma \ref{lemma:shift}.
\end{proof}

From now on we will work with the Hamiltonians $\tH$, and at this stage
we prefer not to specify the parameter $\eps$ yet. These Hamiltonians
have the same periodic orbits with the same actions and indices, and
up to the point when the homotopy between the iterated Hamiltonians is
considered the argument applies to any of the Hamiltonians $\tH$.

It is worth mentioning again that the Hamiltonians $\tH^{\nat k}$ are
not one-periodic in time even though $\tH$ is. This issue, however, is
quite standard and can be dealt with in a straightforward way. Namely,
consider a Hamiltonian $G = K + g$, where $g=g_t$ is time-dependent
for $t \in [0,\,1]$ and $K$ is any autonomous Hamiltonian. The
Hamiltonian diffeomorphism $ \varphi_G $ can be generated by a
one-periodic Hamiltonian
$$
\bar{G}= K + \lambda'(t) g_{\scriptscriptstyle{\lambda (t)}} \circ
\varphi_K^{ \scriptscriptstyle{ \lambda(t)-t } },
$$
where $\lambda \colon [0,1] \to [0,1]$ is an increasing function equal
to zero for $t\approx0$ and one for $t\approx 1$. We apply this
procedure to $\tH^{\nat k}$ with $K=k\eps Q$ and $g=h_k$. The actions,
the Conley-Zehnder indices, and the mean indices of the periodic
orbits do not change.  The change of the set $B_k$ can be made
arbitrarily small, of the order $ \|1 -\lambda'(t)\|_{L^1}$.  As a
consequence, the upper bound \eqref{eq:shift} can also be adjusted by
an arbitrarily small amount independent of $k$. In what follows, we
will treat the Hamiltonians $\tH^{\nat k}$ as one-periodic in time,
allowing for these straightforward modifications.

Now we are in a position to proceed with the proof.  It suffices to
show that there exist arbitrarily large primes which occur as periods
of simple periodic orbits.  Arguing by contradiction, assume that only
finitely many prime numbers are attained as the periods.  From now on,
we always denote by $p$ or $p_i$ a prime number greater than the
largest period.  Let $\tH=\tQ+f$ where $\tQ$ is any Hamiltonian from
Lemma \ref{lemma:Qtilde}.  Then for any such prime $p$ all
$p$-periodic orbits of $\varphi_{\tH}$ are iterations of fixed points
of $\varphi_H$, and hence $\CS(\tH^{\nat p}) = p\, \CS(H)$. Recall in
this connection that $\varphi_H$ is assumed to have finitely many
fixed points.  Next, let us note that all sufficiently large prime
numbers are admissible in the sense of \cite{GG:gap}. Thus, under such
iterations of $\tH$, the orbit $x$ stays isolated, and
$$
\HF(\tH^{\nat p}, x^{p}) = \HF(H^{\nat p},x) =\HF(H,x) 
$$
up to, in the second equality, a shift of degree determined by the 
order of iteration $p$; see \cite[Theorem 1.1]{GG:gap}. In particular,
in our case, $\HF(\tH^{\nat p}, x^{p}) \neq 0$ since $\HF(H,x)\neq 0$.

As has been mentioned above, $\Delta_{\tH}(x)=\Delta_H(x)$, and let us
assume that $\Delta_H (x) > 0$, for the argument is similar if
$\Delta_H (x) < 0$.  Moreover, let us assume for the sake of
simplicity that $\CA_H(x)=0$ and hence $\CA_{\tH}(x)=0$. (The general
case can be dealt with in a similar fashion and requires only
notational modifications.) Consequently, $\CA_{{\tH}^{\nat p}}(x^p)=0$
for all iterations $p$. Let $a>0$ be outside $\CS(\tH)=\CS(H)$ such
that $0$ is the only point in $(-a,\,a) \cap \CS(H)$ and therefore in
$(-ap,\,ap) \cap \CS(\tH^{\nat p})$. Then we have
\begin{equation}
\label{eq:locFH}
\HF^{(-ap,\,ap)}_*(\tH^{\nat p})=
\HF_*(H^{\nat p}, x^p) \oplus \hdots,
\end{equation}
where the dots represent the local Floer homology contributions from
the fixed points with zero action other than $x$. Furthermore, we
henceforth focus on degrees $*$ such that $|*|>n$. This guarantees
that the fixed points with zero mean index do not contribute to
$\HF^{(-ap,\,ap)}_*(\tH^{\nat p})$, for their local Floer homology
groups are supported in $[-n,\,n]$, where the support is, by
definition, the set of degrees for which the local Floer homology
groups are non-zero.  Thus all terms on the right hand side of
\eqref{eq:locFH} come from fixed points with non-zero mean
index. Moreover, we can further restrict $*$ so that only the fixed
points $\gamma$ having the same mean index as $x$ contribute to the
right hand side of \eqref{eq:locFH}. This is possible since the
supports of local Floer homology groups coming from fixed points with
other non-zero mean indices are separated from $\supp \HF_*(H ^{\nat
  p}, x^{p}) \subset [p\Delta_{H} (x)-n, \, p\Delta_{H} (x)+n ] $
whenever $p$ is sufficiently large.

From now on, we work with primes $p>2$ which are as large as is needed
above. Let us order these prime numbers as $p_1 < p_2 < \hdots$. In
what follows, $p_i$ always denotes a prime from this sequence.

Next, recall that $\Delta_H (x) > 0$ and let $m \in \N$ be such that
$m>n/ \Delta_H (x)$. Then, using the fact that $p_{i+m}-p_i \geq 2m$,
we see that the supports of $\HF (H^{\nat p_i}, \gamma^{p_i})$ and $
\HF (H^{\nat {p_{i+m}}}, \gamma^{p_{i+m}}) $ are disjoint for all $i$
and for all fixed points $\gamma$ of $\varphi_H$ with
$\CA_H(\gamma)=0$ and $\Delta_H(\gamma)=\Delta_H(x)$. This is because
$$
[p_i\Delta_H (x)-n,\,p_i\Delta_H (x)+n]
\cap
[p_{i+m}\Delta_H(x)-n,\,p_{i+m}\Delta_H (x)+n] 
= \emptyset,
$$
where the first interval contains $\supp \HF(H^{\nat p_i},
\gamma^{p_i})$ and the second one contains $ \supp \HF(H^{\nat
  p_{i+m}}, \gamma^{p_{i+m}})$. Moreover, for any $p_i$, there exists
an integer $s_i$ such that $ \HF_{s_i}(H^{\nat {p_i}}, x^{p_i}) \neq 0
$ as is mentioned earlier and proved in \cite{GG:gap}. Thus we see
that
\begin{equation}
\label{eq:r}
\HF_{s_i}(H^{\nat {p_i}}, x^{p_i}) \neq 0 \textrm{ and } 
\HF_{s_i} (H^{\nat {p_{i+m}}}, \gamma^{p_{i+m}}) = 0
\end{equation}
for all fixed points $\gamma$ as above, since $s_i$ is outside $\supp
\HF(H^{\nat p_{i+m}}, \gamma^{p_{i+m}})$ for all such $\gamma$.

Choose $p_i$ so large that $p_i a > 6 C_3 (p_{i+m}-p_i) $ where $C_3$
is introduced in Lemma \ref{lemma:shift}. The latter is guaranteed for
large primes by the fact that $p_{i+1}-p_i = o(p_i)$; see
\cite{BHP}. (Obviously, one can write $p_{i+m}-p_i$ as a telescoping
sum of the differences of two consecutive primes, and hence, by a
simple inductive argument, $p_{i+m}-p_i = o(p_i)$.)  Now, pick
$\alpha>0$, depending on $m \text{ and } i$, such that
$$
-p_ia< -\alpha < -\alpha + 2 C_3 (p_{i+m}-p_i) < 0 < 
\alpha < \alpha+ 2 C_3 (p_{i+m}-p_i) < p_i a .
$$
For instance, $\alpha$ satisfying $p_i a - 4 C_3 (p_{i+m}-p_i) <
\alpha < p_i a - 2 C_3 (p_{i+m}-p_i) $ would work. As a
consequence, we also have
$$
-p_{i+m} a < -\alpha + C_3 (p_{i+m}-p_i) < 0 
< \alpha + C_3 (p_{i+m}-p_i) < p_{i+m} a.
$$

Finally, let us specify $\tQ$ and, in turn, $\tH$. To this end, we
choose $\eps>0$ so small that $\| \varphi^{\eps(p_{i+m}-1)}_{Q}\| \leq
2$ and hence \eqref{eq:shift} is satisfied with $k=p_i$ and
$k+l=p_{i+m}$ in the second assertion of Lemma \ref{lemma:shift}.  Set
$\delta:= C_3 (p_{i+m}-p_i)$. Then, for a linear homotopy, which we
may assume to be slow in the sense of Section \ref{sec:homotopy}, from
$\tH^{\nat p_i}$ to $\tH^{\nat p_{i+m}}$, we have the induced map
$$
\HF^{(-\alpha,\,\alpha)}\left( \tH^{\nat p_i} \right)
\to
\HF^{(-\alpha,\,\alpha) + \delta}\left( \tH^{\nat p_{i+m}} \right).
$$
Here the fact that $\delta$ is the correct action shift follows from
Theorem \ref{thm:homotopy} and Lemma \ref{lemma:shift}.  Likewise, the
linear-homotopy map from $ \tH^{\nat p_{i+m}} $ to $\tH^{\nat p_i}$
results in another action shift in $\delta$.  Consider now the following
commutative diagram:
$$
\xymatrix{ 
& \HF^{( -\alpha,\,\alpha) + \delta}_{s_i }\left(\tH^{\nat p_{i+m}}\right)=0
  \ar[rd] \\
  0 \neq \HF^{(-\alpha,\,\alpha)}_{s_i }\left( \tH^{\nat p_i}\right)
   \ar[rr]^\cong \ar[ru]& &
  \HF^{(-\alpha,\,\alpha)+ 2 \delta}_{s_i} \left (\tH^{\nat p_i} \right) 
}
$$
Here the top group is zero due to our choice of the degree $s_i$. On
the other hand,
$$
\HF^{(-\alpha,\,\alpha)}_{s_i }(\tH^{\nat p_i}) 
=
\HF_{s_i }(H^{\nat {p_i}}, x^{p_i}) \oplus \hdots
\neq 0,
$$
and the horizontal arrow is induced by the natural quotient-inclusion
map; see, e.g. \cite{Gi:coiso}. This is, indeed, an isomorphism by the
stability of filtered Floer homology (see, e.g., \cite{GG:gap})
because $0$ is the only action value in the intervals
$(-\alpha,\,\alpha)$ and $(-\alpha,\,\alpha)+2\delta$. To summarize, a
non-zero isomorphism factors through a zero group in the diagram. This
contradiction completes the proof of Theorem \ref{thm:main}, modulo a
proof of Lemma \ref{lemma:Qtilde} which is given below.
\end{proof}

\begin{Remark}  
  Notice that we have actually established the existence of a simple
  periodic orbit of either $H^{\nat p_i}$ or $H^{\nat p_{i+m}}$. In
  particular, starting with a sufficiently large prime number, among
  every $m$ consecutive primes, there exists at least one prime which
  is the period of a simple periodic orbit of $\varphi_H$.

  Furthermore, for an infinite sequence of simple $p_l$-periodic
  orbits $x_l$ of $\varphi_H$ found this way, where $p_l \to \infty$,
  we have $\Delta_{H^{\nat p_l}} (x_l)/ p_l \to \Delta_H (x)$.  Hence,
  in some sense, the mean index $\Delta_H (x)$ is an accumulation
  point in the union of normalized index spectra for $H$ and its all
  iterations. (Of course, $\Delta_H (x)$ could possibly be isolated,
  but then $\Delta_{H^{\nat p_l}} (x_l)/ p_l = \Delta_H (x)$.)  A
  similar fact also holds for the action.

  Finally note that the condition that the eigenvalues $\sigma$ of $Q$
  are real can be relaxed and replaced by the requirement that $\lvert
  \text{Re } \sigma\rvert > \lvert \text{Im } \sigma \rvert $; cf.\
  Remark \ref{rk:complex}.
\end{Remark}

\begin{proof}[Proof of Lemma \ref{lemma:Qtilde}]
  We construct the function $\tQ$ in three steps and then show that
  $\tQ$ has the required properties.
\subsubsection*{Step 1}
Set $c=\sup_V|Q|$. Let $\eta \colon \R \to \R $ be a smooth function
such that 
\begin{itemize}
\item $\eta$ is odd, 
\item $\eta (x) = x$ when $|x| \leq c $,
\item $\eta (x) = \eps x$ when $|x| \geq c'=2c/\eps $,
\item $\eta' \geq \eps/2$.
\end{itemize}
It is easy to see that $\eta$ with these properties exists.  Note that
to have a monotone function $\eta$ such that $\eta(x)=x$ when $|x|\leq
c$ and $\eta(x)=\eps x$ when $|x| \geq c'$, we must have
$c'>c/\eps$. This is the main reason why the radius $R$ in the
statement of the lemma must be of order $1/\sqrt{\eps}$. As the first
step in the construction of $\tQ$, we replace $Q$ by $\eta\circ Q$.

\subsubsection*{Step 2} In the second step, we appropriately cut off
$\eta\circ Q$ and define a new Hamiltonian $\hQ$ which is a linear
transition from $\eta\circ Q$ to $\eps Q$ in the $q$-direction.
Namely, let $r>0$ be the radius of the ball $V$ and set $a_0 = r /
\sqrt{\eps}$ and $a_1 = 2 a_0$. (So, $r < a_0 < a_1$.) The
modification of $\eta\circ Q$ takes place on the domain $a_0 \leq
\|q\| \leq a_1$. To this end, choose a smooth monotone increasing
function $\phi \colon [0,\infty) \to \R $ such that $\phi (x) = 0$
when $x \leq a_0$ and $\phi(x) = 1$ when $x \geq a_1$, and $|\phi'|
\leq 2/|a_1-a_0|=2/a_0$. We then set
$$
\hQ = \phi(\|q\|) \eps Q + \big( 1 - \phi(\|q\|) \big) \eta\circ Q.
$$

\subsubsection*{Step 3}
In the third step, we suitably cut off $\hQ$ and finally define the
desired Hamiltonian $\tQ$. To this end, let $b_0= \max \{ r, 32c /
\lambda r \sqrt{\eps} \}$ and $b_1=2 b_0$. Here, as in Section
\ref{sec:Floer}, $ \lambda = \min \lambda_i>0$, where $\lambda_i$'s
are the eigenvalues of $A$. (The reason for this choice of $b_0$ will
be clear at the end of the proof.)  Choose a smooth monotone
increasing function $\psi \colon [0,\infty) \to \R $ such that $\psi
(x) = 0$ when $x \leq b_0$ and $\psi (x) = 1$ when $x \geq
b_1$. Define
$$
\tQ = \psi(\|p\|) \eps Q + \big( 1 - \psi(\|p\|) \big) \hQ.
$$

\subsubsection*{Checking the conditions (i)-(iv)}
Since $\eta\circ Q=Q$ on $V$ by the definition of $c$, and $a_0 \geq
r$ and $b_0 \geq r$, we clearly have $\tQ = Q$ on the ball
$V$. Furthermore, $\tQ=\eps Q$ outside the ball of radius
$R=\sqrt{a_1^2+b_1^2}$. Let $V_\eps$ be this ball. It is clear from
our choice of $a_1$ and $b_1$ that $R$ has the form $C_1/\sqrt{\eps}$,
where $C_1$ is independent of $\eps$. This proves (i) and (ii).

To establish (iii), observe first that
\begin{equation}
\label{eq:4c}
\sup |\eta(x) - \eps x|
=
\sup_{[0,\,c']} |\eta(x) - \eps x| 
\leq
\eta(c') + \eps c' 
=
2 \eps c'
=
4 \eps c/\eps  
=
4 c.
\end{equation}
Thus
\begin{equation*}
\begin{split}
\sup_{V_\eps}|\tQ| 
& \leq
\sup_{V_\eps}|\eps Q| + \sup_{V_\eps}|\hQ| \\
& \leq 
2\sup_{V_\eps}|\eps Q| + \sup_{V_\eps} |\eta\circ Q| \\
& \leq 
3\sup_{V_\eps}|\eps Q| + \sup_{V_\eps} |\eta\circ Q - \eps Q| \\
& \leq
3\eps \frac{C_1^2}{\eps}\sup_{B(1)}|Q| + 4c\\
& =
3C_1^2 \sup_{B(1)}|Q| + 4c =: C_2,
\end{split}
\end{equation*}
with $C_2$ independent of $\eps$.  This is where replacing $Q$ by
$\eta \circ Q$ in Step 1 is essential.

To verify the condition (iv), note that without loss of generality we
may assume that the off-diagonal part of $A$ is so small that
$$
\pounds_{X_Q}\|p\|^2\leq -\lambda\|p\|^2 
\text{ and }
\pounds_{X_Q}\|q\|^2\leq -\lambda\|q\|^2 .
$$
(Here we dropped the factor of $2$ on the right hand side of the
inequalities to account for the off-diagonal terms.) In particular,
every integral curve of $Q$ enters the polyball $P$ through the
``side'' part, $\|p\|=\const$, of the boundary $\p P$ and leaves it
through the ``top'', $\|q\|=\const$, of $\p P$.

We will show that
\begin{itemize}
\item[(a)] the flow of $\tQ$ is equal to the flow of $Q$ on $P$ and on
  the disk $q=0$, $\|p\|\leq b_0$,
\item[(b)] $\pounds_{X_{\tQ}}\|q\|^2\geq 0$ when $\| p\|\leq b_0$, with
  strict inequality when $q\neq 0$,
\item[(c)] $\pounds_{X_{\tQ}}\|p\|^2< 0$ when $\| p\|\geq b_0$.
\end{itemize}
It is not hard to see that (iv) readily follows from these assertions. 

The assertion (a) is obvious since $\tQ=Q$ in the region where
$|Q|\leq c$ and $\|q\|\leq a_0$ and $\|p\|\leq b_0$, containing
$V\supset P$ and the disk $q=0$, $\|p\|\leq b_0$. It remains to check
(b) and (c), i.e., that the function $\|q\|^2$ increases along the
flow of $\tQ$ when $\|p\| \leq b_0$, and the function $\|p\|^2$
decreases along the flow of $\tQ$ when $\|p\| \geq b_0$, unless $q =
0$.

To prove (b), first note that $\tQ=\hQ$ in the region where $\|p\|\leq
b_0$. Also, recall that $\eta'\geq \eps/2$ and, hence,
$$
\eps \phi(\|q\|) + \big( 1 - \phi(\|q\|) \big) (\eta'\circ Q ) \geq \eps/2.
$$
Therefore, since $\phi(\|q\|)$ is independent of $p$, we have
\begin{equation*}
\begin{split}
\pounds_{X_{\tQ}}\|q\|^2 &=\pounds_{X_{\hQ}}\|q\|^2\\
& =
\eps\phi(\|p\|)\pounds_{X_{Q}}\|q\|^2
+\big(1-\phi(\|p\|)\big)(\eta'\circ Q ) \pounds_{X_{Q}}\|q\|^2\\
& \geq 
\frac{\eps\lambda}{2} \|q\|^2 \geq 0.
\end{split}
\end{equation*}

Let us now establish (c), which is somewhat more involved than (b) due
to the $q$-dependence of $\phi$. Writing $\psi$ for $\psi(\|p\|)$ and
$\phi$ for $\phi(\|q\|)$ and $\phi'$ for $\phi'(\|q\|)$, we have
\begin{equation*}
\begin{split}
\pounds_{X_{\tQ}}\|p\|^2 
& = 
\eps\psi\pounds_{X_{Q}}\|p\|^2
+(1-\psi) \pounds_{X_{\hQ}}\|p\|^2\\
& =
\eps\psi\pounds_{X_{Q}}\|p\|^2 \\
& \quad +(1-\psi)\Big(\eps \phi\pounds_{X_{Q}}\|p\|^2
+ (1-\phi) (\eta'\circ Q) \pounds_{X_{Q}}\|p\|^2\Big)\\
& \quad + (1-\psi)\big(\eps Q - \eta\circ Q\big)
\phi'\pounds_{X_{\|q\|}}\|p\|^2\\
& = 
\Big( \eps\psi + (1-\psi) \big(\eps \phi + (1-\phi) (\eta'\circ Q)\big) \Big) 
\pounds_{X_{Q}}\|p\|^2 \\
& \quad + (1-\psi)
\big(\eps Q - \eta\circ Q\big) \phi'\pounds_{X_{\|q\|}}\|p\|^2.
\end{split}
\end{equation*}
To bound from above the first term in this expression, note that the
coefficient of $\pounds_{X_{Q}}\|p\|^2$ is positive and satisfies
$$
\eps\psi + (1-\psi) \big(\eps \phi + (1-\phi) (\eta'\circ Q)\big)
\geq 
\eps\psi 
+ \frac{\eps}{2}(1-\psi) \\
=
\frac{\eps}{2}(1+\psi)
\geq 
\frac{\eps}{2}.
$$
Therefore, 
$$
\Big( \eps\psi + (1-\psi) \big(\eps \phi 
+ (1-\phi) (\eta'\circ Q)\big) \Big) \pounds_{X_{Q}}\|p\|^2
\leq 
- \eps\frac{\lambda}{2} \|p\|^2.
$$
Bounding from above the second term, our choices of $ a_0 = r /
\sqrt{\eps}$ and $a_1=2 a_0$ and also the requirement that $|\phi'|
\leq 2/|a_1-a_0|=2/a_0$ enter the picture. Then, by \eqref{eq:4c}, we
have
$$
2 |\eps Q - \eta\circ Q| \cdot |\phi'|  \cdot \|p\|
\leq
8c \frac{2}{|a_1-a_0|}\|p\| 
=
\frac{16c}{a_0}\|p\| 
\leq
\frac{16c \sqrt{\eps}}{r} \|p\|.
$$
Thus, in the region where $\|p\| > b_0 = 32c / \lambda r \sqrt{\eps}
$, we have
$$
\pounds_{X_{\tQ}}\|p\|^2 
< 
- \eps\frac{\lambda}{2} \|p\|^2
+
\frac{16c \sqrt{\eps}}{r} \|p\|
<0,
$$
which completes the proof of (c). (Note that $b_0$ is chosen exactly
to make (c) hold.)
\end{proof}

\subsection{Proof of Theorem \ref{thm:lowdim}}
\label{sec:proof-lowdim}
As has been pointed out in the introduction, we have a more general
result in dimension two:

\begin{Theorem}
\label{thm:R2}
Let $H \colon S^1 \times \R^2 \to \R$ be a Hamiltonian which is equal
to a hyperbolic quadratic form at infinity. Assume that $\varphi_H$
has at least two isolated homologically non-trivial fixed points and
$\Fix(\varphi_H)$ is finite.  Then $\varphi_H$ has simple periodic
orbits of arbitrarily large period.
\end{Theorem}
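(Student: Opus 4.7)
The plan is to reduce Theorem \ref{thm:R2} to Theorem \ref{thm:main} whenever possible, and to adapt the proof of Theorem \ref{thm:main} via action separation in the remaining case. If at least one of the two homologically non-trivial fixed points $x_1$, $x_2$ has non-zero mean index, Theorem \ref{thm:main} applies directly. The substantive work lies in the case when both fixed points have zero mean index.

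In this remaining case, the degree separation central to Theorem \ref{thm:main}'s proof is unavailable: when $\Delta = 0$ and $n = 1$, the local Floer homology of each $x_i$ is supported in degrees $\{-1, 0, 1\}$, and admissible iteration does not shift this support. However, $\HF(H) = \HF(Q) = \Z_2$ sits in degree $0$, so the action filtration can play the role of degree. Assuming $\CA_H(x_1) \neq \CA_H(x_2)$, I would translate so that $\CA_H(x_1) = 0$ and set $a = \CA_H(x_2) \neq 0$. For admissible primes $p_i < p_{i+m}$ with $p_{i+m} - p_i = o(p_i)$ (via the prime gap result of \cite{BHP}), the action of the iterated orbit $x_2^p$ equals $pa$, growing linearly in $p$, while the continuation action shift $\delta = C_3(p_{i+m} - p_i)$ from Lemma \ref{lemma:shift} is merely $o(p_i)$. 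I would then choose an action window $W = (p_i a - \alpha, p_i a + \alpha)$ so that $W$ isolates the contribution of $x_2^{p_i}$ in $\HF_0(\tH^{\nat p_i})$ from all other iterated orbits, while the shifted window $W + \delta$ contains no spectral value of $\tH^{\nat p_{i+m}}$; in particular it excludes $0$ and $p_{i+m} a$. The commutative diagram of continuation maps analogous to the one closing the proof of Theorem \ref{thm:main} then exhibits a non-zero class in $\HF^W_0(\tH^{\nat p_i})$ factoring through the vanishing group $\HF^{W + \delta}_0(\tH^{\nat p_{i+m}})$, yielding the required contradiction.

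The main obstacle is the simultaneous parameter choice: $\alpha$ must be large enough to contain $p_i a$, yet small enough that $W + \delta$ avoids every spectral value of $\tH^{\nat p_{i+m}}$. The technical condition that emerges has the form $|a - C_3| \neq 0$, which can be arranged by judiciously choosing the auxiliary Hamiltonian $\tQ$ from Lemma \ref{lemma:Qtilde}, since $C_3$ depends on $\tQ$. A separate issue is the degenerate subcase $\CA_H(x_1) = \CA_H(x_2)$, where neither degree nor action separates the two contributions; this subcase would need to be dispatched by an additional argument, perhaps via a small compactly supported perturbation of $H$ (staying in $\hq$) that displaces the actions while preserving homological non-triviality of both fixed points by stability of local Floer homology, or by a more delicate spectral sequence analysis of the Floer complex.
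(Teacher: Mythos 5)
Your case split is natural, and the reduction to Theorem~\ref{thm:main} when one of the two fixed points has non-zero mean index matches the paper's first line. Your treatment of the remaining case, however, departs substantially from the paper and has two genuine gaps.

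\textbf{The action-window argument needs more than $\CA_H(x_1)\neq\CA_H(x_2)$.} Write $a=\CA_H(x_2)-\CA_H(x_1)\neq 0$ and $\delta=C_3(p_{i+m}-p_i)$. For the composition $\HF^W\to\HF^{W+\delta}\to\HF^{W+2\delta}$ to be the quotient-inclusion isomorphism you need $p_i a$ to lie in both $W$ and $W+2\delta$, which forces the left radius of $W$ to exceed $2\delta$. For the middle group to vanish you need $p_{i+m}a\notin W+\delta$. But under the iteration the spectral value of $x_2$ drifts by $(p_{i+m}-p_i)a$ while the window drifts by $\delta=C_3(p_{i+m}-p_i)$, so the displacement of the spectral value relative to the window is $(p_{i+m}-p_i)(a-C_3)$, which is of the \emph{same order} as $\delta$. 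Working out the two inequalities together gives the condition $|a|>C_3$ (roughly). Since $C_3$ is a fixed constant determined by $Q$, $V$ and $\sup|f|$, and $a$ can be any non-zero real, this is a real restriction — you have not proved the theorem when $0<|a|\le C_3$. The remark that $C_3$ ``can be arranged by judiciously choosing $\tQ$'' is not enough: $C_3$ is only an upper bound on the Hofer-type distance, and Lemma~\ref{lemma:Qtilde} gives no mechanism to drive it below an arbitrary prescribed positive number.

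\textbf{The equal-action subcase is not closed by perturbation.} Perturbing $H$ to some nearby $H'\in\hq$ and concluding that $\varphi_{H'}$ has infinitely many periodic orbits yields nothing about $\varphi_H$: the extra orbits of $\varphi_{H'}$ need not persist as the perturbation is turned off. Stability of local Floer homology protects the two distinguished fixed points, not the new long-period orbits you are trying to create. So this subcase is a genuine hole, not a technicality.

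For comparison, the paper's proof avoids both issues by arguing entirely in degree and mean index: if all homologically non-trivial fixed points have mean index $0$, their local Floer homologies live in degrees $\{-1,0,1\}$, and since $\HF_*(H)$ is $\Z_2$ in degree $0$ and vanishes otherwise, not all of them can be concentrated in degree $0$ — there must be one with non-zero local homology in degree $\pm 1$, i.e.\ a symplectically degenerate maximum (or minimum). The existence of simple periodic orbits of arbitrarily large period then follows from the Conley-conjecture machinery for SDM's, which transfers verbatim because the dynamics is compactly supported. This route needs no action separation, no choice of $\alpha$ or $\delta$, and no case split on $\CA(x_1)$ versus $\CA(x_2)$.
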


\begin{Remark}
\label{rk:elliptic}
  A similar two-dimensional result holds when $H$ is elliptic
  quadratic at infinity. (In fact, the requirement that the fixed
  points be homologically non-trivial is not needed in this case.)
  Indeed, since the Hamiltonian is elliptic outside a compact set, a
  sufficiently large sub-level will be invariant under the flow. Then
  Franks' theorem stating that an area-preserving map of the two-disk
  has either one or infinitely many periodic points, \cite{Fr1},
  implies the result.
\end{Remark}

\begin{Remark} 
\label{rk:Franks}
Using Theorem \ref{thm:R2}, we can also prove a weaker version of
Franks' theorem on $S^2$, asserting that a Hamiltonian diffeomorphism
of $S^2$ with a hyperbolic fixed point must necessarily have
infinitely many periodic orbits. However, the argument is somewhat
involved, and we omit it since this result also follows from the main
theorem of \cite{GG:hyperbolic} and, as has been mentioned in the
introduction, at least two other symplectic proofs of Franks' theorem
are available, \cite{BH,CKRTZ,Ke:JMD}.
\end{Remark}

\begin{proof}[Proof of Theorem \ref{thm:R2}]
  Observe that if $\varphi_H$ has a homologically non-trivial fixed
  point with non-zero mean index, then the theorem follows from
  Theorem \ref{thm:main}. So, let us assume that there are at least
  two isolated homologically non-trivial fixed points with zero mean
  index. Notice that all of these points cannot have non-zero local
  Floer homology concentrated in degree zero: $\HF_*(H)=0$ when
  $*\neq0$ and $\HF_0(H)=\Z_2$.  Thus $\varphi_H$ must have at least
  one fixed point with zero mean index and non-zero local Floer
  homology in degree $\pm 1$. Such an orbit is a symplectically
  degenerate maximum, and its presence implies that $\varphi_H$ has
  simple periodic orbits of arbitrarily large prime period; see
  \cite{GG:gaps,GG:gap} and also \cite{Gi:conley,He:irr,Hi}. (Strictly
  speaking, the latter fact has been established only for (a broad
  class of) closed symplectic manifolds. However, since the
  Hamiltonian in our case is a compactly supported perturbation of a
  hyperbolic quadratic form on $\R^{2n}$, having periodic orbits only
  within the support of the perturbation, the proof in the case of
  closed manifolds, for instance the one in \cite{GG:gaps}, goes
  through word-for-word.)
\end{proof}

\begin{proof}[Proof of Theorem \ref{thm:lowdim} in dimension four]
  Recall that the homology is concentrated in degree zero and
  $\HF_0(H)=\Z_2$. Hence one of the fixed points of $\varphi_H$ must
  have non-zero Conley-Zehnder index. By a straightforward index
  analysis, it is easy to see that, in dimension four, such an orbit
  must necessarily have non-zero mean index. (Indeed, observe that, in
  dimension four, the mean index of a non-degenerate fixed point is
  zero if and only if the linearization is hyperbolic or its
  eigenvalues comprise two pairs ``conjugate'' to each other. It is
  clear that in both cases the Conley-Zehnder index is zero.) Finally,
  applying Theorem \ref{thm:main}, we obtain the existence of simple
  periodic orbits with arbitrarily large prime period.
\end{proof}


\begin{thebibliography}{CKRTZ}


\bibitem[Ab]{Ab-book}
A. Abbondandolo,
\emph{Morse theory for Hamiltonian systems}, Chapman \& Hall/CRC
Research Notes in Mathematics, 2001.


\bibitem[AZ]{AZ} 
H. Amann, E. Zehnder,
Periodic solutions of asymptotically linear Hamiltonian systems,
\emph{Manuscripta Math.} \textbf{32} (1980), no.\ 1-2, 149--189.


\bibitem[An]{An} 
F. Antonacci,
Existence of periodic solutions of Hamiltonian systems with potential
indefinite in sign, \emph{Nonlinear Anal.}, \textbf{29} (1997), no.\
12, 1353--1364.

\bibitem[Ar]{Ar74}
V.I. Arnold,
\emph{Mathematical methods of classical mechanics}, Springer--Verlag,
1974.

\bibitem[Ba]{Ba}
M. Batoreo, 
On hyperbolic points and periodic orbits of symplectomorphisms,
Preprint 2013, arXiv:1310.1974.

\bibitem[BHP]{BHP} 
R.C. Baker, G. Harman J. Pintz, 
The difference between consecutive primes, II,
\emph{Proc.\ London Math.\ Soc.}, \textbf{83} (2001), no.\ 3, 532--562.


\bibitem[BH]{BH}
B. Bramham, H. Hofer,
First steps towards a symplectic dynamics,\emph{ Surveys in
  Differential Geometry}, \textbf{17} (2012), 127--178.


\bibitem[CGG]{CGG} 
M. Chance, V.L. Ginzburg, B.Z. G\"urel, 
Action-index relations for perfect Hamiltonian diffeomorphisms,
\emph{J.\ Symplectic Geom.}, \textbf{11} (2013), 449--474.


\bibitem[CFH]{cfh}
K. Cieliebak, A. Floer, H. Hofer,  
Symplectic homology, II. A general construction, \emph{Math.\ Z.}, 
\textbf{218} (1995), 103--122. 


\bibitem[CKRTZ]{CKRTZ}
B. Collier, E. Kerman, B.M. Reiniger, B. Turmunkh, A. Zimmer,
A symplectic proof of a theorem of Franks, \emph{Compos.\ Math.},
\textbf{148} (2012), 1069--1984.


\bibitem[CZ]{CZ}
C.C. Conley, E. Zehnder, 
Morse-type index theory for flows and periodic solutions for
Hamiltonian equations, \emph{Comm.\ Pure Appl.\ Math.}, \textbf{37}
(1984), 207--253.

\bibitem[Co]{Cor}
O. Cornea,
Homotopical dynamics, III: Real singularities and hamiltonian flows,
\emph{Duke Math.\ J.}, \textbf{109} (2001), 183--204.


\bibitem[De]{De}
J.W. Demmel,
Applied Numerical Linear Algebra, SIAM, 1997.


\bibitem[EP]{EP}
M. Entov, L. Polterovich, 
Rigid subsets of symplectic manifolds, \emph{Compos.\ Math.}
\textbf{145} (2009), 773--826.


\bibitem[Fl1]{F:89witten}
A. Floer,
Witten's complex and infinite-dimensional Morse theory,
\emph{J.\ Differential Geom.}, \textbf{30} (1989), 207--221.


\bibitem[Fl2]{Fl89fp}
A. Floer,
Symplectic fixed points and holomorphic spheres, \emph{Comm.\ Math.\
Phys.}, \textbf{120} (1989), 575--611.

\bibitem[Fr1]{Fr1}
J. Franks, 
Geodesics on $S^2$ and periodic points of annulus homeomorphisms,
\emph{Invent.\ Math.}, \textbf{108} (1992), 403--418.

\bibitem[Fr2]{Fr2} 
J. Franks, 
Area preserving homeomorphisms of open surfaces of genus zero,
\emph{New York Jour.\ of Math.}, \textbf{2} (1996), 1--19.

\bibitem[FH]{FrHa}
J. Franks, M. Handel,
Periodic points of Hamiltonian surface diffeomorphisms, \emph{Geom.\
Topol.}, \textbf{7} (2003), 713--756.

\bibitem[Gi1]{Gi:coiso}
V.L. Ginzburg,
Coisotropic intersections, \emph{Duke Math.\ J.}, \textbf{140} (2007),
111--163.

\bibitem[Gi2]{Gi:conley}
V.L. Ginzburg,
The Conley conjecture, \emph{Ann.\ of Math.\ (2)}, \textbf{172}
(2010), 1127--1180.

\bibitem[GG1]{GG:gaps}
V.L. Ginzburg, B.Z. G\"urel,
Action and index spectra and periodic orbits in Hamiltonian dynamics,
\emph{Geom.\ Topol.}, \textbf{13} (2009), 2745--2805.

\bibitem[GG2]{GG09generic} 
V.L. Ginzburg, B.Z. G\"urel, 
On the generic existence of periodic orbits in Hamiltonian 
dynamics, \emph{J.\ Mod.\ Dyn.}, \textbf{3} (2009), 595--610.

\bibitem[GG3]{GG:gap}
V.L. Ginzburg, B.Z. G\"urel,
Local Floer homology and the action gap, \emph{J. Sympl.\ Geom.},
\textbf{8} (2010), 323--357.

\bibitem[GG4]{GG:neg-mon} 
V.L. Ginzburg, B.Z. G\"urel,
Conley conjecture for negative monotone symplectic manifolds,
\emph{Int.\ Math.\ Res.\ Not.\ IMRN}, 2011, doi: 10.1093/imrn/rnr081.
 
\bibitem[GG5]{GG:hyperbolic}
V.L. Ginzburg, B.Z. G\"urel,
Hyperbolic fixed points and periodic orbits of Hamiltonian
diffeomorphisms, Preprint 2012, arXiv:1208.1733, to appear in
\emph{Duke Math.\ J.}

\bibitem[G\"u]{Gu:nc} 
B.Z. G\"urel, 
On non-contractible periodic orbits of Hamiltonian diffeomorphisms,
\emph{Bull.\ Lond.\ Math.\ Soc.}, 2013, doi: 10.1112/blms/bdt051.

\bibitem[He]{He:irr}
D. Hein,
The Conley conjecture for irrational symplectic manifolds,
\emph{J. Sympl.\ Geom.}, \textbf{10} (2012), 183--202.

\bibitem[Hi]{Hi}
N. Hingston,
Subharmonic solutions of Hamiltonian equations on tori, \emph{Ann.\ of
  Math.\ (2)}, \textbf{170} (2009), 525--560.

\bibitem[HS]{HS}
H. Hofer, D. Salamon, 
Floer homology and Novikov rings, in \emph{The Floer memorial volume},
483--524, Progr.\ Math., 133, Birkh\"auser, Basel, 1995.

\bibitem[HZ]{HZ}
H. Hofer, E. Zehnder,
\emph{Symplectic Invariants and Hamiltonian Dynamics}, Birk\"auser,
1994.

\bibitem[Ke]{Ke:JMD}
E. Kerman,
On primes and period growth for Hamiltonian diffeomorphisms,
\emph{J. Mod.\ Dyn.}, \textbf{6} (2012), 41--58.

\bibitem[LeC]{LeC}
P. Le Calvez,
Periodic orbits of Hamiltonian homeomorphisms of surfaces,
\emph{Duke Math.\ J.}, \textbf{133} (2006),  125--184.

\bibitem[Lo]{Lo}
Y. Long,  
\emph{Index Theory for Symplectic Paths with Applications}, Progress
in Mathematics, 207, Birkh\"auser Verlag, Basel, 2002.

\bibitem[McD]{McD}
D. McDuff, 
Symplectic manifolds with contact type boundary, \emph{Invent.\
  Math.}, \textbf{13} (1991), 651--671.

\bibitem[MS]{MS}
D. McDuff, D. Salamon,
\emph{J-holomorphic Curves and Symplectic Topology}, Colloquium
publications, vol.\ 52, AMS, Providence, RI, 2004.


\bibitem[MW]{MW} 
J. Mawhin, M. Willem, 
\emph{Critical Point Theory and Hamiltonian Systems}, Springer,
Berlin, 1989.


\bibitem[Ra]{Ra}
P.H. Rabinowitz,
On subharmonic solutions of Hamiltonian systems, \emph{Comm.\ Pure
  Appl.\ Math.}, \textbf{33} (1980), no.\ 5, 609--633.


\bibitem[Sa]{Sa}
D.A. Salamon,
Lectures on Floer homology, in \emph{Symplectic Geometry and
Topology}, Eds: Y. Eliashberg and L. Traynor, IAS/Park City
Mathematics series, \textbf{7} (1999), pp.\ 143--230.

\bibitem[SZ]{SZ}
D. Salamon, E. Zehnder,
Morse theory for periodic solutions of Hamiltonian systems and the
Maslov index, \emph{Comm.\ Pure Appl.\ Math.}, \textbf{45} (1992),
1303--1360.


\bibitem[Sc]{sc}
M. Schwarz, 
On the action spectrum for closed symplectically aspherical manifolds,
\emph{Pacific J.\ Math.}, \textbf{193} (2000), 419--461.


\bibitem[Se]{Sei}
P. Seidel,
A biased view of symplectic cohomology, in \emph{Current Developments
  in Mathematics}, Volume 2006, 211--253; Int.\ Press, Sommerville,
MA, 2008.


\bibitem[Vi]{vi}
C. Viterbo, 
Functors and computations in Floer homology with applications, I, 
\emph{Geom.\ Funct.\ Anal.}, \textbf{9} (1999), 985--1033. 


\bibitem[Wi]{wi}
J. Williamson,
On the algebraic problem concerning the normal forms of linear
dynamical systems, \emph{Amer.\ J.\ Math.}, \textbf{58} (1936), no.\
1, 141--163.


\bibitem[ZL]{ZL}
Q. Zhang, C. Liu,
Infinitely many periodic solutions for second order Hamiltonian
systems, \emph{ J.\ Differential Equations}, \textbf{251} (2011), no.\
4-5, 816--833.


\bibitem[Zo]{Zou}
W. Zou,
Multiple solutions for second-order Hamiltonian systems via
computation of the critical groups, \emph{ Nonlinear Anal.},
\textbf{44} (2001), no.\ 7, Ser.\ A: Theory Methods, 975--989.


\end{thebibliography}
\end{document}